\newtheorem{defn}{Definition}
\newtheorem{thm}{Theorem}
\newtheorem{prop}{Proposition}
\newtheorem{lem}{Lemma}
\theoremstyle{remark}
\newtheorem{example}{Example}
\newcommand{\ZZ}{\mathbb{Z}}
\newcommand{\HH}{\mathrm{H}}
\newcommand{\A}{\mathrm{A}}
\newcommand{\RR}{\mathbb{R}}
\newcommand{\PP}{\mathbb{P}}
\newcommand{\Cplx}{\mathbb{C}}
\newcommand{\OO}{\mathcal{O}}
\newcommand{\Eu}{\mathrm{Eu}}
\newcommand{\Bl}{\mathrm{Bl}}
\definecolor{PineGreen}{rgb}{0.0,0.47,0.44}
\definecolor{MidnightBlue}{rgb}{0.1,0.1,0.44}
\definecolor{magenta}{rgb}{1.0,0.0,1.0}
\title{The Chern-Mather class of the multiview variety}
\author{Corey Harris and Daniel Lowengrub}
\begin{document}
\maketitle

\begin{abstract}
The \emph{multiview variety} associated to a collection of $N$ cameras records which sequences of image points in $\PP^{2N}$ can be obtained by taking pictures of a given world point $x\in\PP^3$ with the cameras. In order to reconstruct a scene from its picture under the different cameras it is important to be able to find the critical points of the function which measures the distance between a general point $u\in\PP^{2N}$ and the multiview variety. In this paper we calculate a specific degree $3$ polynomial that computes the number of critical points as a function of $N$. In order to do this, we construct a resolution of the multiview variety, and use it to compute its Chern-Mather class.
\end{abstract}
\section{Introduction}
Suppose that a collection of cameras are used to generate images of a scene.
The problem of \emph{triangulation} is to deduce the world coordinates of an object from its position in
each of the camera images.
If we assume that the image points are given with infinite precision, then two cameras suffice to determine the world point.
However, due to the many sources of noise in real images such as pixelization and distortion, there typically will not be
an exact solution and we will instead try to find a world point whose picture is ``as close as possible''
to the image points.

More precisely, suppose the cameras are $C_1,\dots,C_N$ and the image points are $p_1,\dots,p_n \in \RR^2$.
The goal is to find a world point $q\in\RR^3$ that minimizes the least squares error
\[
\mathrm{error}(q) = \sum_{i=1}^N(C_i(q) - p_i)^2.
\]

One application is the problem of reconstructing the 3D structure of a tourist attraction based on millions of online pictures.
It is difficult to obtain the precise configuration of any single camera, so it would not make sense to use only a small subset of them and disregard the rest.
A better approach is to solve an optimization problem which incorporates as many of the cameras as possible.
This technique was used in \cite{ROME} to reconstruct the entire city of Rome from two million online images.

Since the camera function $C_i : \RR^3 \rightarrow \RR^2$ is not linear, the standard method for solving the
triangulation problem is to first find the critical points of $\mathrm{error}(q)$ (e.g, with gradient descent),
and then select the one with the smallest error. In order to gauge the difficulty of this problem, it
is important to be able to predict the number of critical points that we expect to find for a given configuration of
cameras.

The goal of this paper is to give an explicit expression for the number of critical points of $\mathrm{error}(q)$ as
a function of the number of cameras $N$.
In fact, we compute this expression for a variation of the problem in which we allow the world points to take complex
values, and we allow these points to be in the projective space $\PP_\Cplx^3$ as opposed to the affine space $\Cplx^3$.
Our main result is that the number of critical points of $\mathrm{error}(q)$ is polynomial in the number of cameras.
\begin{thm}\label{thm:main}
The number of critical points of $\mathrm{error}(q)$ on $\PP^3_\Cplx$ is equal to
\[
p(N) = 6N^3 - 15N^2 + 11N - 4
\]
where $N$ is the number of cameras.
\end{thm}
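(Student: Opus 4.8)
The plan is to recognize the count as a Euclidean distance degree and to compute it through the Chern--Mather class of the multiview variety, as the abstract promises. The function $\mathrm{error}(q)$ is the restriction to the multiview variety $X\subset\PP^{2N}$ of a quadratic distance function, so that for general image data the number of its critical points is the generic Euclidean distance degree $\mathrm{ED}(X)$; one first checks that the isotropic quadric meets $X$ transversally enough for the general theory to apply, which will follow from general position of the cameras. By the results of Draisma, Horobe\c{t}, Ottaviani, Sturmfels and Thomas, and their refinement by Aluffi and Harris, $\mathrm{ED}(X)$ is a fixed universal $\ZZ$--linear combination of the degrees $\mu_0,\dots,\mu_3$ of the graded pieces of $c_{\mathrm{M}}(X)\in A_*(\PP^{2N})$ --- equivalently, the sum of the polar degrees of $X$. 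So it suffices to compute $c_{\mathrm{M}}(X)$.

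To do so I would first build a resolution. Let $z_1,\dots,z_N\in\PP^3$ be the camera centres, in general position, and let $\beta\colon\widetilde X:=\Bl_{\{z_1,\dots,z_N\}}\PP^3\to\PP^3$ with exceptional divisors $E_1,\dots,E_N\cong\PP^2$. Since each camera is the linear projection away from its centre, blowing up the $z_i$ turns the rational map $\PP^3\dashrightarrow\PP^{2N}$ into a morphism $\widetilde\phi\colon\widetilde X\to\PP^{2N}$ with image $X$, and $\widetilde\phi$ is birational onto $X$ because triangulation is generically one--to--one for $N\ge 2$. In the Chow ring $A^*(\widetilde X)=\ZZ[H,E_1,\dots,E_N]/I$, with $HE_i=0$, $E_iE_j=0$ for $i\ne j$, $E_i^3=H^3$ and the remaining standard relations, I record the two ingredients needed below: the class $L:=\widetilde\phi^*\OO_{\PP^{2N}}(1)$, an explicit combination of $H$ and the $E_i$ (each camera contributing $H-E_i$), and $c(T\widetilde X)$, obtained from $\beta^*c(T\PP^3)$ by the standard correction for blowing up points in a threefold.

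The heart of the argument is to show that $\widetilde X$ computes $c_{\mathrm{M}}(X)$ faithfully. Using general position --- crucially that no three centres are collinear --- a local computation on each $E_i$ shows that $d\widetilde\phi$ is injective there: $d\widetilde C_i$ kills only the fibre direction of $E_i$, and for $j\ne i$ the differential $d\widetilde C_j$ kills only the direction of the line $\overline{z_iz_j}$, so the remaining cameras separate it. Hence the Gauss map of $X$ lifts to a morphism on $\widetilde X$, the Nash blow--up $\widehat X$ of $X$ is dominated by $\widetilde X$, and the pullback to $\widetilde X$ of the Mather tangent bundle is identified --- after accounting for the exceptional loci --- with a bundle built from $T\widetilde X$ and the $E_i$, yielding a pushforward formula for $c_{\mathrm{M}}(X)$; in the cleanest range, where $\widetilde\phi$ is an immersion and $\widehat X\cong\widetilde X$, this reads simply $c_{\mathrm{M}}(X)=\widetilde\phi_*\big(c(T\widetilde X)\cap[\widetilde X]\big)$, while for small $N$ (notably $N=2$, where $\widetilde\phi$ contracts the strict transform of $\overline{z_1z_2}$ and $X$ is genuinely singular) a correction term along the contracted locus is required. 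An equivalent route, sidestepping the Nash blow--up, presents the conormal variety $\mathrm{Con}(X)\subset\PP^{2N}\times(\PP^{2N})^\vee$ as the image of the projectivised annihilator of the $1$--jets of the defining sections on $\widetilde X$ (governed by the jet bundle $J^1(L)$, whose Chern classes are built from $c(T\widetilde X)$ and $c_1(L)$), and reads the polar degrees off as its multidegrees.

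It then remains to run the intersection computation on $\widetilde X$ --- expand $c(T\widetilde X)$, multiply by the relevant powers of $L$ (equivalently by the Chern polynomial of $J^1(L)$), extract the four degrees, and substitute into the universal Euclidean distance degree formula --- after which collecting terms, all polynomial in $N$, produces $p(N)=6N^3-15N^2+11N-4$, with $p(1)$ and $p(2)$ (the latter a singular case) serving as consistency checks. I expect the genuine difficulty to lie entirely in the previous step: certifying that the explicit blow--up captures the Chern--Mather class, i.e. controlling $\widetilde\phi$ over the singular locus of $X$ and over the hyperplane at infinity of $\PP^{2N}$, since that is precisely where the hypotheses on the cameras are used; once that is in place, the Chow--ring calculation on the multiple point blow--up of $\PP^3$ is mechanical.
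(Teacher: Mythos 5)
Your overall strategy (ED degree via polar degrees via the Chern--Mather class, computed on an explicit resolution) is the paper's strategy, but two of your concrete steps fail. First, the resolution: the rational map relevant here is $\phi:\PP^3\dashrightarrow\PP^{2N}$ given by clearing denominators as in equation \ref{eqn:cameramap}, and its base locus is not just the camera centres $q_i$ but also all the lines $L_{ij}=H_i\cap H_j$. Blowing up only the centres resolves the map to $(\PP^2)^N$, not the map to $\PP^{2N}$, and polar degrees and ED degrees depend on the embedding in $\PP^{2N}$; so your Chow-ring setup (with $L$ built from classes $H-E_i$) computes invariants of the wrong projective model. The paper's resolution $\tilde\PP^3$ blows up the centres $q_i$, the triple points $p_{ijk}$, and then the (separated) lines $L_{ij}$, and the pullback of $\OO_{\PP^{2N}}(1)$ involves all of these exceptional classes (lemma \ref{lem:pullback_c1}).

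Second, and more seriously, the claim that in the ``cleanest range'' $\tilde\phi$ is an immersion and $c^M(X)=\tilde\phi_*\bigl(c(T\tilde X)\cap[\tilde X]\bigr)$ is false for every $N\geq 2$: each camera plane $H_i$ is contracted by $\tilde\phi$ to a line $X_i\cong\PP^1\subset MV_N$, the multiview variety is singular along these $N$ lines, and a correction term supported on them is always present. Your local computation of injectivity of $d\tilde\phi$ on the exceptional divisors $E_i$ over the centres addresses the wrong locus entirely. The paper handles this by higher discriminants (proposition \ref{prop:hd-camera}), which show the only correction is $\alpha\sum_i\Eu_{X_i}$ with $\alpha=2-\Eu_{MV_N}(x)$ for $x\in X_i$ general, and then computes $\Eu_{MV_N}(x)=3-N$ by cutting with a general hyperplane and recognizing a taut singularity (cone over a rational normal curve of degree $N-1$). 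Omitting this correction changes the answer by a term growing like $(N-1)$ times the class of a line, so your computation could not yield $p(N)$. Finally, your appeal to ``transversality to the isotropic quadric from general position'' to equate the critical-point count with the sum of polar degrees is a gap: the paper instead uses that a general affine translate of a multiview variety is again a multiview variety (for cameras $M_iP_i$), which is what lets the generic-translate statement from the ED literature apply to this specific, non-generic variety.
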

Note that our reformulation of the problem only increases the number of possible critical points.
One can solve the original problem by first finding these points, and then discarding the ones that are not in $\RR^3$.

For a similar reason, the polynomial $p(N)$ is an upper bound on the number of critical points in the
classical triangulation problem.
In \cite{SSN}, a detailed investigation of the Lagrange multiplier equations which define the critical points is used
to compute the number of such points for $N \leq 7$.
Based on these results, it was conjectured in \cite[Conjecture 3.4]{ED} that the number of points should grow as the following polynomial:
\[
q(N) = \frac{9}{2}N^3 - \frac{21}{2}N^2 + 8N - 4.
\]
We note that our upper bound $p(N)$ is fairly close.

In order to compute the number $p(N)$, we take a slightly different perspective on the function $\mathrm{error}(q)$.
By combining the cameras $C_i:\RR^3 \rightarrow \RR^2$ we obtain a rational map
\[
\phi: \RR^3 \rightarrow \RR^{2N}.
\]

After passing to the complex numbers and taking the projective closure we obtain a rational map
\[
\phi: \PP^3_{\Cplx} \rightarrow \PP^{2N}_{\Cplx}.
\]

The image of this map is a three-dimensional variety $MV_{N} \subset \PP^{2N}$ which is known as the
\emph{multiview variety}.
We can now interpret the error function $\mathrm{error}(q)$ as measuring the distance between
a point $q \in \PP^{2N}$ and $MV_N$.
With this formulation, the number of critical points is known as the \emph{Euclidean distance degree} of the variety
$MV_N$.
The notion of ED degree was introduced in \cite{ED}, and the authors remark in \cite[ex 3.3]{ED} that the triangulation
problem was their original motivation for this concept.

In particular, by using results from \cite{ED} we prove in section \ref{sec:mather-of-mv} that this number can be
computed in terms of the Chern-Mather class $c^M(MV_N)$.
In general, the Chern-Mather class only provides an upper bound on the ED degree, but in the proof of theorem \ref{thm:ed-of_mv} we show that this inequality can be promoted to an equality for reasons specific to the multiview variety.
One advantage of this approach is that it depends only on the geometric properties of $MV_N$, and not on the specific features of the defining equations. Another advantage is that it reduces most of the difficulty to local calculations on $MV_N$.

One common way of calculating the Chern-Mather class of a singular variety $X$ is to first find a resolution
\[
\tilde{X} \xrightarrow{f} X
\]
and then analyze the singularities of $f$ in order to compare the Chern class $c(\tilde{X})$ to the Chern-Mather class $c^M(X)$

In our situation, it is natural to build a resolution of $MV_N$ by resolving the rational map $\phi$.
In section \ref{sec:resolution-of-mv}, we construct such a resolution 
\[
\tilde{\phi} : \tilde{\PP^3} \rightarrow MV_N
\]
and calculate its Chow ring and Chern class.

In order to compare the Chern class of $\tilde{\PP^3}$ to the Chern-Mather class of $MV_N$, we use the theory of \emph{higher discriminants} which was introduced in \cite{MS}.
One aspect of this theory is that it specifies which parts of the singular locus of $X$
we need to understand in order to relate $c(\tilde{X})$ to $c^M(X)$.
A precise statement is given in proposition \ref{prop:pushforward-one}.

As we show in proposition \ref{prop:hd-camera}, the higher discriminants of $\tilde{\phi}$ are
surprisingly nice.
Specifically, it turns out that in order to calculate $c^M(MV_N)$, we only have to compute the Euler obstruction of a single point $x \in MV_N$.

Moreover, in section \ref{sec:euler_obs_of_x} we show that after intersecting $MV_N$ with a hyperplane at $x$, the resulting surface singularity $(S,x)$ is \emph{taut}.
In particular, the Euler obstruction $\Eu_{MV_N}(x)$ is determined by the resolution graph of $x$ in $S$.
This allows us to use the enumerative properties of $\tilde{\PP}^3$ that are worked out in section \ref{sec:resolution-of-mv} to compute $\Eu_{MV_N}(x)$.

In the final section, we put these pieces together and obtain the polynomial $p(N)$.

\section{Definitions and notation}
Let $P$ be a $3\times 4$ matrix with values in $\RR$.
We consider each row $l$ as an affine function on $\RR^3$.
Explicitly, $l$ sends a vector $v=(x,y,z)$ to the dot product of $l$ and $(x,y,z,1)$.
We denote these functions by $f$, $g$ and $h$.

The matrix $P$ defines a rational map $\phi_P:\RR^3 \dashrightarrow \RR^2$:
\[
v \mapsto (f(v)/h(v), g(v)/h(v))
\]
which corresponds to the operation of mapping the ``world coordinates'' $\RR^3$
to the ``image coordinates'' $\RR^2$.
In other words, it describes the process of taking a picture of the world with a camera whose parameters are
encoded in $P$.

It is not hard to prove that this description of a camera is equivalent to the pinhole camera model.
In particular, the camera has a position called the \emph{camera center} and is pointing in a certain direction.
The plane defined by the camera center and direction is called the \emph{camera plane}.
 It turns out that with the above notation, the camera plane is the plane defined by the ideal $(h)$,
and the camera center is the point defined by $(f,g,h)$.
For the purposes of this paper, this observation will be taken as a definition.

Now, suppose that we have a collection of cameras $P_1,\dots,P_N$.
By taking a picture of the world with each of the cameras, we obtain a rational map:
\[
\phi_{P_1} \times\dots\times \phi_{P_N}:\RR^3 \dashrightarrow \RR^2 \times\dots\times \RR^2 \cong \RR^{2N}
\]

This map clearly extends to the complex numbers, giving us a rational map from $\Cplx^3 \dashrightarrow \Cplx^{2N}$.
Furthermore, by clearing the denominators in the definition of the maps $\phi_{P_i}$ we obtain a rational map
\[
\phi: \PP_\Cplx^3 \dashrightarrow \PP_\Cplx^{2N}
\]
defined by
\begin{equation}\label{eqn:cameramap}
\phi \left( [x:y:z:w] \right) = \left( f_1 h_2 \dots h_N : g_1 h_2 \dots h_N : \dots : h_1 \dots h_{N-1} g_N : h_1 \dots h_N \right).
\end{equation}
The scheme theoretic image of this map is called the \emph{multiview variety} associated to the cameras $P_1,\dots,P_N$.

\begin{example}
Consider the following three cameras:
\[
P_1 =
\begin{pmatrix}
  1 & 0 & 0 & 1 \\
  0 & 0 & 1 & 1  \\
  0 & 1 & 0 & 0
\end{pmatrix},
\qquad
P_2 =
\begin{pmatrix}
  0 & 1 & 0 & 1 \\
  0 & 0 & 1 & 1  \\
  1 & 0 & 0 & 0
\end{pmatrix},
\qquad
P_3 =
\begin{pmatrix}
  1 & 0 & 0 & 1 \\
  0 & 1 & 0 & 1  \\
  0 & 0 & 1 & 0
\end{pmatrix}.
\]

The associated rational map is
\[
\phi([x:y:z:w]) = [(x-w)xz : (z-w)xz : (y-w)yz : (z-w)yz : (x-w)xy : (y-w)xy : xyz]).
\]
\end{example}

We say that a collection of cameras is in \emph{general position} if the hyperplanes defined by the linear functions $ \{f_1, g_1, h_1, \dots, f_N, g_N, h_N\}$
associated to the rows of the camera matrices are in general position.

\tdplotsetmaincoords{65}{110}
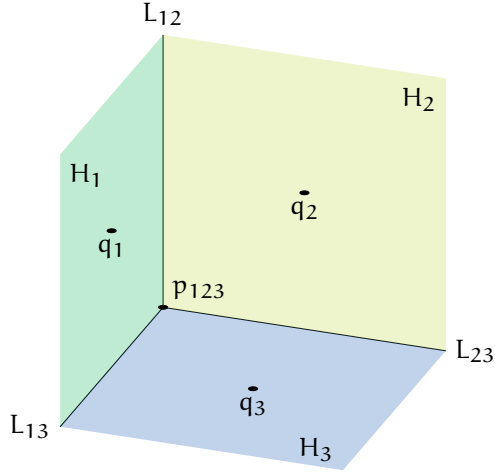
\begin{figure}[htb]
	\centering
	\begin{tikzpicture}[scale=2,tdplot_main_coords]
	    \coordinate (O) at (0,0,0);
	    \coordinate (e1) at (1,0,0);
	    \coordinate (e2) at (0,1,0);
	    \coordinate (e3) at (0,0,1);
        \draw [black] (O) -- (0,0,2);
        \draw [black] (O) -- (0,2,0);
        \draw [black] (O) -- (2,0,0);
        \path [fill opacity=0.25,fill=green!70!blue, thick] (O) -- (2,0,0) -- (2,0,2) -- (0,0,2);
        \path [fill opacity=0.25,fill=yellow!70!green, thick] (O) -- (0,2,0) -- (0,2,2) -- (0,0,2);
        \path [fill opacity=0.25,fill=blue!70!green, thick] (O) -- (0,2,0) -- (2,2,0) -- (2,0,0);
        \draw [fill] (1,0,1) circle [radius=0.03];
        \node [below] at (1,0,1) {$q_1$};
        \node [below right] at (2,0,2) {$H_1$};
        \draw [fill] (0,1,1) circle [radius=0.03];
        \node [below] at (0,1,1) {$q_2$};
        \node [below left] at (0,2,2) {$H_2$};
        \draw [fill] (1,1,0) circle [radius=0.03];
        \node [below] at (1,1,0) {$q_3$};
        \node [above left] at (2,2,0) {$H_3$};
        \draw [fill] (O) circle [radius=0.03];
        \node [above right] at (O) {$p_{123}$};
        \node [right] at (0,2,0) {$L_{23}$};
        \node [above] at (0,0,2) {$L_{12}$};
        \node [left] at (2,0,0) {$L_{13}$};
	\end{tikzpicture}
	\caption{Schematic of three cameras}
	\label{fig:cameras}
\end{figure}

Finally, we will use the following notation throughout the paper (see figure \ref{fig:cameras}).
The camera plane of the $i$-th camera will be denoted by $H_i$ and the center of the $i$-th camera
will be denoted by $q_i$.
Also, we define $L_{ij}=H_i\cap H_j$ for all $1 \leq i < j \leq N$,
and $p_{ijk} = H_i \cap H_j \cap H_k$ for all $1 \leq i < j < k \leq N$.

\section{A resolution of the multiview variety} \label{sec:resolution-of-mv}
In this section we describe a resolution of the multiview variety associated to $N$ cameras in general position.
It is obtained as an iterated blow up along smooth centers.
We then apply standard theorems to compute a presentation of the Chow ring of the resolution, and identify a couple of
important ring elements.

Let $P_1,\dots,P_N$ be camera matrices for a collection of $N$ cameras in general position, and let
\[
\phi:\PP^3 \dashrightarrow \PP^{2N}
\]
be the corresponding rational map.
We denote the associated multiview variety by $MV_N \subset \PP^{2N}$.


\begin{prop}
The base locus $B$ of $\phi$ is the reduced scheme supported on the union of the camera centers
$q_1,\dots,q_N$ and the lines $L_{ij} = H_i \cap H_j$ for all $1 \leq i < j \leq N$.
\end{prop}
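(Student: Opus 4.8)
The plan is to first determine $B$ set-theoretically by a direct case analysis on the forms defining $\phi$, and then to upgrade this to the scheme-theoretic statement by checking reducedness locally at every point. Write $I\subseteq\Cplx[x,y,z,w]$ for the ideal generated by the $2N+1$ forms in \eqref{eqn:cameramap}, so that $B=V(I)$. General position will enter in three guises: every point of $\PP^3$ lies on at most three of the planes $H_1,\dots,H_N$ (four general hyperplanes in $\PP^3$ have empty intersection); each camera center $q_k=V(f_k,g_k,h_k)$ lies on no other camera plane $H_l$, hence on none of the lines $L_{ij}$; and any three of the linear forms among $\{f_i,g_i,h_i\}$ that vanish at a common point are linearly independent there, so form a regular system of parameters in that local ring.

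For the set-theoretic description, observe that $x\in B$ forces $\prod_j h_j(x)=0$, so $x\in H_k$ for some $k$. If $x$ lies on exactly one plane $H_k$, then for $i\neq k$ the monomial $\prod_{j\neq i}h_j$ contains the factor $h_k$, so the generators $f_i\prod_{j\neq i}h_j$ and $g_i\prod_{j\neq i}h_j$ vanish at $x$ automatically; for $i=k$ the factor $\prod_{j\neq k}h_j$ is nonzero at $x$, forcing $f_k(x)=g_k(x)=0$, i.e.\ $x=q_k$. If $x$ lies on two planes $H_k,H_l$, then every monomial $\prod_{j\neq i}h_j$ contains $h_k$ or $h_l$, so all generators vanish and $x\in B$; such $x$ lies on $L_{kl}$. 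Conversely each $q_k$ and each $L_{ij}$ is visibly contained in $B$. Hence $B_{\mathrm{red}}=\bigcup_k q_k\cup\bigcup_{i<j}L_{ij}$, with the $q_k$ isolated and the lines meeting pairwise exactly in the points $p_{ijk}$.

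For reducedness it is enough to show that $I$ localized at each closed point of $B$ is a radical ideal; I would organize this by the number of camera planes through the point. At $q_k$ (one plane): $h_k,f_k,g_k$ are local parameters and all other linear forms are units, so $I$ localizes to the maximal ideal $(h_k,f_k,g_k)$. At a point on exactly two planes $H_i,H_j$: $h_i,h_j$ are part of a regular system of parameters and the other $h_l$ are units, and reading off the generators (those indexed by $i$, resp.\ $j$, contribute $h_j$, resp.\ $h_i$, up to units; the rest contribute multiples of $h_ih_j$) shows $I$ localizes to $(h_i,h_j)$. At $p_{ijk}$ (three planes): $h_i,h_j,h_k$ are local parameters, the rest are units, and the generators collapse to $I=(h_ih_j,\,h_jh_k,\,h_kh_i)$. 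By general position there are no four-plane points, so these cases are exhaustive, and in each the localized ideal is radical — whence $B=B_{\mathrm{red}}$.

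The one step that needs a small argument rather than bookkeeping is the three-plane model: one must know that $(h_ih_j,h_jh_k,h_kh_i)$ is radical, which follows from the elementary identity $(xy,yz,zx)=(x,y)\cap(y,z)\cap(z,x)$ for monomial ideals (so that this ideal equals $I(L_{ij})\cap I(L_{ik})\cap I(L_{jk})$ locally at $p_{ijk}$). The remaining work is keeping careful track of which camera forms are units in each local ring, which is precisely what general position controls; I expect no genuine obstacle beyond this.
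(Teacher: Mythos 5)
Your proof is correct and follows essentially the same route as the paper: read off the support from the defining forms, then use general position to decide which linear forms are units locally, so that the ideal reduces to the maximal ideal at a camera center, to $(h_i,h_j)$ along a line, and to the radical monomial ideal $(h_ih_j,h_jh_k,h_kh_i)$ at a triple point (the paper only spells out this last computation, at $p_{123}$, and asserts the rest). The one slip is in your two-plane case: at the finitely many points of $L_{ij}$ where $f_i$ (or $g_i$) vanishes, only one of the two camera-$i$ generators is $h_j$ times a unit, but since the other is still a multiple of $h_j$ the localized ideal remains $(h_i,h_j)$ and your conclusion stands.
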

\begin{proof}
It can be seen directly from the equations of $\phi$ (equation \ref{eqn:cameramap}) that $B$ is supported on the camera centers union the lines $L_{ij}$.
We will show that the scheme structure of $B$ is the reduced structure on this set.
By a strategic choice of coordinates on $\PP^3$, we can assume that $h_1=x$, $h_2=y$ and $h_3=z$.

We now analyze the scheme structure of $B$ in a neighborhood of the point $p_{123} = (x,y,z)$.
First of all, recall that the $i$-th camera contributes the two equations
$f_i \cdot \prod_{j\neq i}h_j$ and $g_i \cdot \prod_{j\neq i} h_j$
to the ideal of $B$.

By our genericity assumptions, all of the $f_i$'s, all of the $g_i$'s, and $h_i$ for $i\geq 4$ are
invertible in some Zariski neighborhood of $p_{123}$.
This implies that in a neighborhood of $p_{123}$, the ideal of $B$ has the form:
\[
(xy, xz, yz).
\]
Thus, the ideal defined by this scheme is reduced and supported on the coordinate axes.
The same argument shows that all of the lines $L_{ij}$ in the base locus have the reduced scheme structure. 
A similar argument implies the points $q_i$ are reduced.
\end{proof}

\subsection{Constructing a resolution of \texorpdfstring{$\phi$}{phi}}
In this section we construct a resolution of $MV_N$ in two stages.
First, we blow up $\PP^3$ at the points $q_1,\dots,q_N$ and at the points $p_{ijk}$ for all
$1 \leq i < j < k \leq N$. This gives us a map
\[
b_1 : Y_1 \rightarrow \PP^3.
\]
Let $\tilde{L}_{ij} \subset Y_1$ denote the proper transform of $L_{ij}$.
Note that these proper transforms are disjoint lines in $Y_1$.

For the second step, we blow up each of the lines $\tilde{L}_{ij}$ and obtain a resolution
\[
b_2: Y_2 \rightarrow Y_1
\]

Let us denote $Y_2$ by $\tilde{\PP}^3$, and denote the composition $b_1\circ b_2$ by $\pi$.
Since the pullback of the base locus $\pi^{-1}(B)$ is a Cartier divisor on $\tilde{\PP}^3$, there exists a canonical map
$\tilde{\PP}^3 \xrightarrow{\psi} \Bl_B\PP^3$ which fits into the following diagram:
\[
\xymatrix{
  \tilde{\PP}^3 \ar[r]^\psi \ar[dr]^\pi & \Bl_B\PP^3 \ar[d]^b \ar[dr]^{\Bl_B\phi} &           \\
                                      & \PP^3 \ar@{-->}[r]_\phi                &  \PP^{2N}
}
\]
were $b$ is the blowup map and $\Bl_B\phi$ is the resolution of the rational map $\phi$.

Finally, we define $\tilde{\phi} = \Bl_B\phi \circ \psi$.
Since $\tilde{\PP}^3$ is smooth, we thus obtain the following resolution of $MV_N$:
\[
\xymatrix{
  \tilde{\PP}^3 \ar[d]^\pi \ar[dr]^{\tilde{\phi}}  &    \\
  \PP^3 \ar@{-->}[r]_\phi                &  MV_N \subset \PP^{2N}
}
\]

By an abuse of notation, we will sometimes think of $\tilde{\phi}$ as a map to $\PP^{2N}$, and other times as a map to $MV_N$.

\subsection{The Chow ring of \texorpdfstring{$\tilde{\PP}^3$}{PP3~}}\label{sec:chow-ring-blowup}
Since $\tilde{\PP}^3$ is an iterated blowup of $\PP^3$ along smooth centers, we can use standard theorems to compute its Chow ring.
We will use a statement in \cite{K} which we state here for convenience.

\begin{thm}\label{thm:chow-ring-blowup}\cite[Appendix, Thm.~1]{K}
  Let $X\xrightarrow{i}Y$ be a closed embedding of smooth schemes.
  Let $\tilde{Y}$ be the blowup of $Y$ along $X$ and let $\tilde{X}$ denote the exceptional divisor.
  Suppose the map $i^* : \A^{\bullet}(Y)\rightarrow\A^{\bullet}(X)$ is surjective.
  Then, $\A^{\bullet}(\tilde{Y})$ is isomorphic to
  \[
  \frac{\A^{\bullet}(Y)[T]}{(P(T), T\cdot \mathrm{ker}(i^*))}
  \]
  where $P_{X/Y}(T) \in \A^{\bullet}(Y)[T]$ is a degree $d$ polynomial whose constant term is $[X]$, and whose restriction to $X$ is the Chern polynomial of $N_{X/Y}$.
  In other words,
  \[
  i^*P_{X/Y}(T) = T^d + c_1(N_{X/Y})T^{d-1} + \dots + c_{d-1}(N_{X/Y})T + c_d(N_{X/Y}).
  \]
  The isomorphism is induced by the map $f^*:\A^{\bullet}(Y) \rightarrow \A^{\bullet}(\tilde{Y})$,
  and by sending $-T$ to the class of the exceptional divisor.
\end{thm}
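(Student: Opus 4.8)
The statement to prove is Theorem~\ref{thm:chow-ring-blowup} about the Chow ring of a blowup along a smooth center with surjective pullback. Since this is quoted from \cite{K} as a known result, I would present it as a self-contained reproof following the standard structure: combine the blowup exact sequence for Chow groups with the module structure of the exceptional divisor's Chow ring over that of the base.

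First I would recall the key input, the blowup formula of Keel (or equivalently the one in Fulton's \emph{Intersection Theory}): if $\tilde Y = \Bl_X Y$ with exceptional divisor $E = \PP(N_{X/Y})$ and $j : E \hookrightarrow \tilde Y$, $g : E \to X$ the projection, then there is a split exact sequence
\[
0 \to \A^{\bullet}(X) \xrightarrow{\alpha} \A^{\bullet}(\tilde Y) \oplus \A^{\bullet}(Y) \xrightarrow{\beta} \A^{\bullet}(\tilde Y) \to 0,
\]
and as an $\A^{\bullet}(Y)$-module $\A^{\bullet}(\tilde Y) = f^*\A^{\bullet}(Y) \oplus \bigoplus_{k=1}^{d-1} j_*(g^* \cdot)(\cdot)$ where the twists are by powers of the tautological class on $E$. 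The point is that everything in $\A^{\bullet}(\tilde Y)$ is $f^*$ of something on $Y$ plus $j_*$ of something on $E$, and $\A^{\bullet}(E) = \A^{\bullet}(X)[\zeta]/(\zeta^d + c_1 \zeta^{d-1} + \dots + c_d)$ by the projective bundle formula, where $\zeta = c_1(\OO_E(1))$ and $c_i = c_i(N_{X/Y})$.

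Next I would set $T := -[E] = -j_*(1)$ and check the two relations. The relation $P_{X/Y}(T) = 0$: restrict to $E$ using $j^*[E] = -\zeta$ (self-intersection of the exceptional divisor) to see that $j^* P_{X/Y}(T) = (-1)^d(\zeta^d + c_1\zeta^{d-1} + \dots + c_d) = 0$ in $\A^{\bullet}(E)$ since $i^*P_{X/Y}$ is the Chern polynomial of $N_{X/Y}$; then use the projection formula $j_* j^*(\cdot) = [E] \cdot (\cdot)$ together with $[X] = $ constant term to promote this to the identity $P_{X/Y}(T) = 0$ on $\tilde Y$ (the constant-term bookkeeping is exactly why $P$ has constant term $f^*[X]$ up to sign). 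The relation $T \cdot \ker(i^*) = 0$: for $\eta \in \ker(i^*) \subset \A^{\bullet}(Y)$ we have $f^*\eta \cdot [E] = j_*(j^* f^* \eta) = j_*(g^* i^* \eta) = j_*(0) = 0$, using $j^* f^* = g^* i^*$. Finally I would verify that the ring map $\A^{\bullet}(Y)[T]/(P_{X/Y}(T), T\ker(i^*)) \to \A^{\bullet}(\tilde Y)$ sending $T \mapsto -[E]$ is an isomorphism: surjectivity is immediate from the module decomposition above (the $f^*$ part comes from $\A^{\bullet}(Y)$, and $j_*(g^*\gamma \cdot \zeta^k)$ is hit by $f^*\tilde\gamma \cdot (-T)^{k+1}$ for any lift $\tilde\gamma$ of $\gamma$, using surjectivity of $i^*$); injectivity is a rank/generator count comparing the presentation's module structure (free of rank $d$ over $\A^{\bullet}(X)$ in the $T$-direction, glued to $\A^{\bullet}(Y)$ along $\ker i^*$) with the blowup sequence.

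\textbf{Main obstacle.} The genuinely delicate point is the bookkeeping that turns the identity ``$j^*P_{X/Y}(T) = 0$ in $\A^{\bullet}(E)$'' into ``$P_{X/Y}(T) = 0$ in $\A^{\bullet}(\tilde Y)$'' with the correct constant term $f^*[X]$: one must track how $j_*$ interacts with the non-leading coefficients, use $j_*(\zeta^k) = (-1)^? s_?(N_{X/Y}) \cap [X]$ pushed forward, and see the Segre/Chern classes recombine so that $P_{X/Y}(-[E])$ collapses to zero rather than to some correction term supported on $E$. This is where the precise normalization of $P_{X/Y}$ in the statement is forced, and where I expect to spend most of the effort; the surjectivity and the $\ker(i^*)$-relation are comparatively routine. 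Since the result is cited verbatim from \cite[Appendix, Thm.~1]{K}, in the paper itself I would likely just invoke it, but the above is how I would reconstruct the proof.
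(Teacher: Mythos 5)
The paper offers no proof of this theorem---it is quoted verbatim from \cite[Appendix, Thm.~1]{K}---and your reconstruction is the standard argument behind Keel's result (the blowup exact sequence $\A^{\bullet}(X) \to \A^{\bullet}(Y)\oplus\A^{\bullet}(E) \to \A^{\bullet}(\tilde Y)\to 0$ plus the projective bundle formula for $E=\PP(N_{X/Y})$, then verification of the relations $P_{X/Y}(T)=0$ and $T\cdot\ker(i^*)=0$ with $T=-[E]$, where the ``main obstacle'' you flag is resolved by the key formula $f^*i_*\alpha = j_*\bigl(c_{d-1}(F)\cap g^*\alpha\bigr)$ for the excess bundle $F=g^*N_{X/Y}/\OO_E(-1)$, applied to $\alpha=[X]$), so your approach matches the cited source rather than anything argued in the paper. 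Two minor points: the middle term of your exact sequence should be $\A^{\bullet}(Y)\oplus\A^{\bullet}(E)$, not $\A^{\bullet}(\tilde Y)\oplus\A^{\bullet}(Y)$, and the injectivity step (``rank/generator count'') is only gestured at, though the surjectivity of $i^*$ does make that comparison go through in the standard way.
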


The polynomial $P_{X/Y}$ is called the \emph{Poincar\'{e} polynomial of $X$ in $Y$}.

By applying theorem \ref{thm:chow-ring-blowup} first to $Y_1 \xrightarrow{b_1} \PP^3$ and then to
$\tilde{\PP}^3=Y_2 \xrightarrow{b_2} Y_1$ we find that $\A^\bullet(\tilde{\PP}^3)$ is a quotient of the polynomial algebra
\[
A = \ZZ[\{h\} \cup \{Q_i\}_{1\leq i \leq N} \cup \{P_{ijk}\}_{1\leq i<j<k \leq N} \cup \{T_{ij}\}_{1\leq i<j \leq N}].
\]
The meaning of the generators is as follows.
Let $\tilde{q}_i \in \tilde{\PP}^3$ denote the exceptional divisor of the camera center $q_i$,
$\tilde{p}_{ijk} \in \tilde{\PP}^3$ the exceptional divisor of the point $p_{ijk}$,
and $\tilde{L}_{ij} \subset \tilde{\PP}^3$ the exceptional divisor of the line $L_{ij}$.

Then, we have the following identities in $\A^\bullet(\tilde{\PP}^3)$:
\begin{align*}
[\tilde{q}_i]     = -Q_i, && [\tilde{p}_{ijk}] = -P_{ijk}, &&
[\tilde{L}_{ij}]  = -T_{ij}.
\end{align*}

In the next section, we will need to evaluate the degree map
\[
\mathrm{deg}: \A^3(\tilde{\PP}^3) \rightarrow \ZZ.
\]
Since $\tilde{\PP}^3$ is irreducible, $\A^3(\tilde{\PP}^3)$ has rank one.
In addition, $\mathrm{deg}(h^3)=1$.
This means that calculating the degree map is equivalent to expressing every monomial $\alpha \in \A^3(\tilde{\PP}^3)$ as a multiple of $h^3$:
\[
\alpha = \mathrm{deg}(\alpha) \cdot h^3.
\]

To simplify the calculation, note that product of two generators that correspond to disjoint subschemes of $\tilde{\PP}^3$ is zero.
For example, $Q_i\cdot P_{jkl} = 0$ for all $i$, $j$, $k$ and $l$.

Thus, the main difficulty is dealing with self intersections such as $T^3_{ij}$.
In order to deal with these, we will calculate the Poincar\'{e}  polynomials of $q_i \subset Y_1$, $p_{ijk}\subset Y_1$ and $L_{ij}\subset Y_2$.
By theorem \ref{thm:chow-ring-blowup}, this will give us relations involving the self intersections, which in this case turn out to suffice for the degree calculation.

Since $q_i\subset Y_1$ is a point, its Poincar\'{e} polynomial is
\[
P_{q_i/Y_1}(Q_i) = Q_i^3 + h^3,
\]
and similarly,
\[
P_{p_{ijk}/Y_1}(P_{ijk} )= P_{ijk}^3 + h^3 .
\]

Finally, note that $L_{ij}\subset Y_1$ is a line that passes through $N-2$ blown up points.
We deduce from this that
\[
P_{L_{ij}/Y_2}(T_{ij}) = T_{ij}^2 - 2(N-3)hT_{ij} + h^2 + \sum_{k\notin\{i,j\}}P^2_{ijk}.
\]

\subsection{The Chern class of the resolution}
In this section we compute $c(\tilde{\PP}^3)$ as an element of $\A^\bullet(\tilde{\PP}^3)$ and find its pushforward to $\PP^{2N}$ (proposition \ref{prop:chern-pushforward}).
Our main tool will be the following proposition.

\begin{prop}\cite[Example 15.4.2]{F}\label{prop:fulton-chern-blowup}
  Let $Y$ be a smooth scheme and $X\subset Y$ be a closed smooth subscheme with codimension $d$.
  Consider the following blowup diagram.
  \[
  \xymatrix{
    \tilde{X} \ar[d]_g \ar[r]^j  &  \tilde{Y} \ar[d]^f \\
    X \ar[r]_i                   &  Y
  }
  \]
  Suppose that $c_k(N_{X/Y}) = i^*c_k$ for some $c_k\in \A^k(Y)$,
  and that $c(X)=i^*\alpha$ for some $\alpha\in\A^\bullet(Y)$.
  Let $\eta = c_1(\OO_{\tilde{Y}}(\tilde{X}))$.
  Then,
  \[
  c(\tilde{Y}) - f^*c(Y) = f^*(\alpha) \cdot \beta
  \]
  where
  \[
  \beta = (1 + \eta)\sum_{i=0}^d(1-\eta)^if^*c_{d-i} - \sum_{i=0}^df^*c_{d-i}.
  \]
\end{prop}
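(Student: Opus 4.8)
The plan is to reduce the statement to a computation in the Chow ring of $\tilde{Y}$ using the excess intersection formula. The key geometric input is Fulton's formula (which the authors cite) for the total Chern class of a blowup along a regularly embedded smooth center: if $E = \tilde{X}$ is the exceptional divisor, $g : E \to X$ the projection, and $\tilde{N}$ denotes the universal quotient on $E$ fitting into $0 \to \OO_E(-E) \to g^*N_{X/Y} \to \tilde{N} \to 0$, then
\[
c(\tilde{Y}) = f^* c(Y) + j_*\!\left( g^* c(X) \cdot \left( \frac{c(\tilde N)}{1 - E|_E} \cdot (1 + E|_E) - c\!\left(g^*N_{X/Y}\right) \right) \cdot \frac{1}{1 + E|_E}\right),
\]
or some bookkeeping-equivalent thereof; the precise form is what one finds in \cite[Ex.~15.4.2]{F}. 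So the first step is to write down that citation's formula carefully, with all pushforwards and restrictions made explicit.

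The second step is to convert everything from classes on $E$ to classes on $\tilde Y$. Here I would use the two standard facts about the exceptional divisor: the self-intersection formula $j^* \eta = c_1(\OO_E(E)) = -\zeta$ where $\zeta$ is the relative hyperplane class of the projective bundle $E = \PP(N_{X/Y}) \to X$, and the projection formula $j_*(j^*\omega \cdot \xi) = \omega \cdot j_*\xi$ for $\omega \in \A^\bullet(\tilde Y)$. The hypotheses $c_k(N_{X/Y}) = i^* c_k$ and $c(X) = i^*\alpha$ are exactly what let me pull the factors $c_k$ and $\alpha$ out of the pushforward: any class of the form $g^*(i^* c_k)$ equals $j^* f^* c_k$, so by the projection formula it comes out as $f^* c_k$ times $j_*$ of the remaining factor. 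After extracting $f^*\alpha$ and all the $f^*c_{d-i}$'s, what is left inside $j_*$ is a polynomial in $\zeta = -j^*\eta$ alone, and one uses the classical identity $j_*(\zeta^m) = $ (a Segre-class combination of the $c_k$'s, which again pull back) — but in fact the cleanest route is to recall the single relation $j_*(1) = \eta$ together with $j_*(j^*\eta^{\,k}) = \eta^{k+1}$, i.e. $j_* ( (-\zeta)^k ) = \eta^{k+1}$, so every monomial in $\zeta$ pushes forward to a power of $\eta$ times pullbacks. Grinding the algebra, one collects the $\eta$-powers into the two geometric-series-like sums and matches them against the claimed $\beta = (1+\eta)\sum_{i=0}^d (1-\eta)^i f^*c_{d-i} - \sum_{i=0}^d f^* c_{d-i}$.

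The third step is simply to verify that the exceptional-divisor class appearing in the final answer is $\eta = c_1(\OO_{\tilde Y}(\tilde X))$ as stated, and that the sum ranges are correct: the $d+1$ terms $c_0 = 1, c_1, \dots, c_d$ appear because $N_{X/Y}$ has rank $d$, and the exceptional divisor $\PP(N_{X/Y})$ has relative dimension $d-1$, so $\zeta^{d}$ is the highest power that does not already vanish after one more multiplication by a fiber class — equivalently $\sum_{i=0}^{d}$ is the natural truncation. I would double-check the combinatorial claim $\sum_m j_*( (-\zeta)^m ) t^m$-type generating-function manipulation against the low-codimension cases $d = 1$ (blowup along a divisor, where $\beta$ should vanish, since $\tilde Y = Y$) and $d = 2$ (blowup along a curve or point) to catch sign errors.

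The main obstacle will be the bookkeeping in the second step: keeping the distinction between a class on $E$, its image under $j_*$, and the pull-push gymnastics straight, and in particular making sure the factor $(1+\eta)$ versus $\frac{1}{1+E|_E}$ on $E$ gets normalized correctly when one passes to $\tilde Y$. The projection formula together with $j^* \eta = -\zeta$ does all the work, but it is easy to be off by a factor of $(1+\eta)$ or a sign in $\zeta$. Everything else — the hypotheses on $c_k(N_{X/Y})$ and $c(X)$, the form of Fulton's formula — is machinery that is either quoted or standard, so no genuinely new idea is needed; the content is entirely in organizing the computation so that the final answer is visibly expressed purely in terms of classes pulled back from $Y$ and the single new class $\eta$.
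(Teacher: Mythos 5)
The paper does not actually prove this proposition --- it is quoted from \cite[Ex.~15.4.2]{F} --- so there is no internal argument to compare yours against; what you propose is the standard reduction that bridges Fulton's blowup formula (which expresses $c(\tilde Y)-f^*c(Y)$ as $j_*$ of a class living on the exceptional divisor) and the form stated here, and your plan is sound. The identities you lean on are the right ones and are correctly stated: $j^*\eta=c_1(\OO_{\tilde X}(\tilde X))=-\zeta$, $g^*i^*=j^*f^*$ (which is where the hypotheses $c_k(N_{X/Y})=i^*c_k$ and $c(X)=i^*\alpha$ enter), and the projection formula giving $j_*\bigl((-\zeta)^k\bigr)=j_*(j^*\eta^k)=\eta^{k+1}$, so that once everything under $j_*$ is a pullback the pushforward just multiplies by $\eta$; consistently, the claimed $\beta$ is $\equiv 0 \pmod{\eta}$. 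Two cautions. First, your from-memory transcription of the cited formula is not in the shape you will find in \cite{F}, and since all the algebra is downstream of it, ``write down the citation's formula carefully'' is genuinely the load-bearing step; Fulton's example also essentially records the variant in which the Chern classes of $N_{X/Y}$ extend to $Y$, so the remaining ``grinding'' is shorter than your sketch suggests. Second, when matching the answer it helps to note that $j^*\bigl(\sum_{i=0}^{d}(1-\eta)^i f^*c_{d-i}\bigr)=\sum_{i=0}^{d}(1+\zeta)^i g^*c_{d-i}(N_{X/Y})=c\bigl(g^*N_{X/Y}\otimes\OO_{\tilde X}(1)\bigr)$, which makes the two geometric-series sums appear naturally, and that $f^*\gamma\cdot\eta=j_*(g^*i^*\gamma)$ shows the right-hand side is independent of the choices of $c_k$ and $\alpha$ --- this is also why your $d=1$ check works: there $\beta=\eta\,(f^*c_1-\eta)$ vanishes as a class by exactly this mechanism, not identically as a polynomial. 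With those points attended to, your outline completes to a correct proof.
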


One takeaway of this proposition is that the Chern class of the blowup along a disjoint union of subvarieties is obtained by summing over contributions from the individual components.

\begin{prop}
The Chern class of the resolution $\tilde{\PP}^3$ is equal to
\[
c(\tilde{\PP}^3) = (1+h)^4 + \sum_{1 \leq i \leq N}\alpha_i + 
\sum_{1 \leq i < j \leq N}\beta_{ij} + 
\sum_{1 \leq i < j < k \leq N}\gamma_{ijk}
\]
where
\begin{align*}
\alpha_i &=  (1 - Q_i)(1 + Q_i)^3 - 1, \\
\beta_{ij} &=  (1+h)^2 \cdot [(1-T_{ij})((1+T_{ij})(-2(N-3)h) + (1+T_{ij})^2) - (1 - 2(N-3)h)], \\
\gamma_{ijk} &= (1 - P_{ijk})(1 + P_{ijk})^3 - 1.
\end{align*}
\end{prop}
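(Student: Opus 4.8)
The plan is to apply Proposition \ref{prop:fulton-chern-blowup} iteratively through the two blowup stages $Y_1 \to \PP^3$ and $Y_2 = \tilde{\PP}^3 \to Y_1$, and to collect the contributions of the individual centers. The starting point is $c(\PP^3) = (1+h)^4$, pulled back freely to each successive blowup. Since the first-stage centers (the points $q_i$ and $p_{ijk}$) are disjoint, and the second-stage centers (the proper transforms $\tilde L_{ij}$) are disjoint among themselves, the "takeaway" remark following the proposition applies: each center contributes an independent additive term, and there is no interaction to track between distinct centers. The only subtlety is that the lines $L_{ij}$ are blown up \emph{after} the points, so the relevant normal bundle and Chern data for the second stage must be computed on $Y_1$ rather than on $\PP^3$.

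Concretely, I would proceed as follows. First, for a blown-up point $q_i$ (codimension $d=3$ in the smooth threefold), the normal bundle is trivial so all $c_k(N) = 0$ for $k\geq 1$, $c_0 = 1$; taking $\alpha = (1+h)^4$ for the ambient Chern class restricted appropriately and $\eta = c_1(\OO(\tilde q_i))$, the formula for $\beta$ in Proposition \ref{prop:fulton-chern-blowup} collapses to $(1+\eta)(1-\eta)^3 - 1$. Writing $\eta = -Q_i$ using the identification $[\tilde q_i] = -Q_i$, and noting that $f^*(1+h)^4$ times this expression reduces because $Q_i \cdot h = 0$ (the exceptional divisor of a point meets the pullback of a hyperplane trivially), I get exactly $\alpha_i = (1-Q_i)(1+Q_i)^3 - 1$. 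The identical computation with $P_{ijk}$ in place of $Q_i$ yields $\gamma_{ijk}$.

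Next, for a line $L_{ij} \subset Y_1$, which has codimension $d=2$: I need $c(L_{ij})$ and $c(N_{L_{ij}/Y_1})$ expressed as restrictions of ambient classes. Here $L_{ij} \cong \PP^1$, so $c(L_{ij}) = (1+h)^2$ restricted (the factor $\alpha = (1+h)^2$ in $\beta_{ij}$). The normal bundle $N_{L_{ij}/Y_1}$ is the key input: since $L_{ij}$ originally has normal bundle $\OO(1)\oplus\OO(1)$ in $\PP^3$, and it passes through $N-2$ of the blown-up points, each such point modifies the degree, so $c_1(N_{L_{ij}/Y_1})$ picks up a contribution $-2(N-3)h$ together with the $\sum_{k} P_{ijk}$ terms — this is precisely the data already recorded in the Poincaré polynomial $P_{L_{ij}/Y_2}$ in the previous subsection, from which one reads $c_1 = -2(N-3)h$ (after accounting for the $P_{ijk}$ pieces, which drop out of the surviving degree-$3$ monomials) and $c_2$ the class of the line. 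Substituting $d=2$, $\eta = -T_{ij}$, $c_1 = -2(N-3)h$, $c_2 = [L_{ij}]$ into the $\beta$-formula and simplifying gives $\beta_{ij} = (1+h)^2\cdot[(1-T_{ij})((1+T_{ij})(-2(N-3)h) + (1+T_{ij})^2) - (1-2(N-3)h)]$.

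The main obstacle is the second step: correctly extracting $c(N_{L_{ij}/Y_1})$ as the restriction of a global class on $Y_1$, since this normal bundle lives on the \emph{already-blown-up} variety $Y_1$ and must reflect that $L_{ij}$'s proper transform has been altered at the $N-2$ points $p_{ijk}$ through which it passes. One must check that the hypotheses of Proposition \ref{prop:fulton-chern-blowup} (that $c_k(N)$ and $c(X)$ are pulled back from the ambient space) genuinely hold in this iterated setting, and that the bookkeeping of which $P_{ijk}$ and $Q_i$ generators survive multiplication is consistent with the disjointness relations. Once the normal bundle contribution is pinned down — essentially by reusing the Poincaré polynomial computation already in hand — the rest is the routine algebra of expanding the $\beta$-formula, and the additivity over disjoint centers assembles the four groups of terms into the stated sum.
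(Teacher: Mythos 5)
Your proposal is correct and follows essentially the same route as the paper: apply Proposition \ref{prop:fulton-chern-blowup} separately to each blown-up center (points $q_i$, $p_{ijk}$ on $\PP^3$, then the lines $\tilde L_{ij}$ on $Y_1$), with $c(N_{L_{ij}/Y_1})$ determined by the passage through the $N-2$ points (giving $c_1=-2(N-3)h$), and sum the contributions using disjointness. The only cosmetic difference is your choice of lifts ($\alpha=(1+h)^4$ and $c_2=[L_{ij}]$ versus the paper's $\alpha=1$, $c_2=0$), which changes nothing since the discrepancies lie in $\ker(i^*)$ and are killed by the relation $T\cdot\ker(i^*)=0$.
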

\begin{proof}
Our strategy will be to use proposition \ref{prop:fulton-chern-blowup} to compute the contributions to the Chern class of each of the varieties that are blown up during the construction of $\tilde{\PP}^3$.

We first apply proposition \ref{prop:fulton-chern-blowup} to the situation where $Y=\PP^3$ and $X=q_i$ for some $i$.
In this case, we can take $c_0=1$, $c_k=0$ for $k>0$ and $\alpha=1$.
By proposition \ref{prop:fulton-chern-blowup} the blowup at $q_i$ will contribute
\[
\alpha_i = (1 - Q_i)(1 + Q_i)^3 - 1.
\]

Similarly, $\gamma_{ijk}$ represents the contribution from the blowup of the point $p_{ijk}$.

Finally, we compute the contribution from the blowup along a line $f:L_{ij}\hookrightarrow Y_1$.
Since $L_{ij}$ passes through $N-2$ of the blown up points in $Y_1$,
a quick calculation shows that we can take $c_0=1$, $c_1 = -2(N-3)h$, and the rest to be zero.
In addition, since $L_{ij}\cong\PP^1$, we can take $\alpha=(1+h)^2$.
This implies that the contribution coming from $L_{ij}$ is
\[
\beta_{ij} = (1+h)^2 \cdot [(1-T_{ij})((1-\eta)(-2(N-3)h) + (1+T_{ij})^2) - (1 - 2(N-3)h)].
\]
\end{proof}


We now compute the pullback of $c_1(\OO_{\PP^{2n}}(1))$ in $\A^\bullet(\tilde{\PP}^3)$ along the map $\tilde{\phi}$.
\begin{lem}\label{lem:pullback_c1}
The pullback of $c_1(\OO_{\PP^{2n}}(1))$ to $\tilde\PP^3$ is
\[
\tilde{\phi}^*(c_1(\OO_{\PP^{2n}}(1))) \cap [\tilde\PP^3] = N\cdot h + 2\cdot\sum_{1\leq i < j < k \leq N} P_{ijk} + \sum_{1 \leq i \leq N} Q_i + \sum_{1 \leq i < j \leq N} T_{ij}.
\]
\end{lem}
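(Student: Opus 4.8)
The plan is to identify the line bundle $\tilde{\phi}^{*}\OO_{\PP^{2N}}(1)$ explicitly as a twist of $\pi^{*}\OO_{\PP^{3}}(N)$. Each coordinate form appearing in \eqref{eqn:cameramap} is a product of $N$ linear forms, so $\phi$ is defined by a linear subsystem $V\subseteq H^{0}(\PP^{3},\OO(N))$ whose scheme-theoretic base locus is the base scheme $B$ (with the reduced structure established in the proposition above). By construction $\tilde{\phi}$ is the morphism resolving this system, so
\[
\tilde{\phi}^{*}\OO_{\PP^{2N}}(1) \;=\; \pi^{*}\OO_{\PP^{3}}(N)\otimes\OO_{\tilde{\PP}^{3}}\!\bigl(-F\bigr),
\]
where $F=\pi^{-1}(B)$ is the Cartier divisor cut out by the pullback of $\mathcal{I}_{B}$ (the fixed divisor of $\pi^{*}V$); recall from the construction that $\pi^{-1}(B)$ is indeed Cartier. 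Since $F$ is supported on the exceptional locus of $\pi$, we may write
\[
F \;=\; \sum_{i} m_{i}\,\tilde{q}_{i} \;+\; \sum_{i<j<k} n_{ijk}\,\tilde{p}_{ijk} \;+\; \sum_{i<j} \ell_{ij}\,\tilde{L}_{ij}
\]
for nonnegative integers $m_{i},n_{ijk},\ell_{ij}$, and it remains only to compute these coefficients. Each one is local: $m_{i}$ is read off near $q_{i}$, $\ell_{ij}$ near a general point of $L_{ij}$, and $n_{ijk}$ (together with a second determination of $\ell_{ij}$) near $p_{ijk}$; the relevant local models are exactly those produced in the proof of the base-locus proposition.

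Next I would run the local computations. Near $q_{i}$ the base ideal is the maximal ideal $\mathfrak{m}_{q_{i}}$ (the forms $f_{i},g_{i},h_{i}$ being local coordinates there, by genericity), so blowing up $q_{i}$ once makes it principal and $m_{i}=1$. Near a general point of $L_{ij}$ the base ideal is the radical ideal $(u,v)$ of the line, so blowing up $L_{ij}$ once gives $\ell_{ij}=1$. The only nontrivial point is $p_{ijk}$, where, in suitable coordinates with $x,y,z$ the local equations of $H_{i},H_{j},H_{k}$, the base ideal is $(xy,xz,yz)$. I would resolve this in charts: in the chart $x=x$, $y=xy'$, $z=xz'$ the ideal becomes $x^{2}\cdot(y',z')$ (and symmetrically in the other two charts), and blowing up the strict transform $\{y'=z'=0\}$ of the coordinate line then makes it principal, equal to $(x^{2}y')$. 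Reading off exponents, the exceptional divisor over the point $p_{ijk}$ occurs with multiplicity $2$ and each exceptional divisor over a line with multiplicity $1$; hence $n_{ijk}=2$, consistent with $\ell_{ij}=1$. One should also check that the second blow-up $b_{2}$ does not further change the coefficient of $\tilde{p}_{ijk}$ in $F$, which holds because the centers $\tilde{L}_{ij}$ of $b_{2}$ are not contained in $\tilde{p}_{ijk}$ (they meet it only in a point).

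Finally, substituting $m_{i}=1$, $n_{ijk}=2$, $\ell_{ij}=1$ and using the identities $[\tilde{q}_{i}]=-Q_{i}$, $[\tilde{p}_{ijk}]=-P_{ijk}$, $[\tilde{L}_{ij}]=-T_{ij}$ together with $\pi^{*}c_{1}(\OO_{\PP^{3}}(1))=h$, one gets
\[
\tilde{\phi}^{*}\bigl(c_{1}(\OO_{\PP^{2N}}(1))\bigr)\cap[\tilde{\PP}^{3}] \;=\; N\,h - [F] \;=\; N\,h + \sum_{1\leq i\leq N} Q_{i} + 2\sum_{1\leq i<j<k\leq N} P_{ijk} + \sum_{1\leq i<j\leq N} T_{ij},
\]
which is the claimed formula. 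I expect the main obstacle to be the blow-up bookkeeping near the points $p_{ijk}$: pinning down the multiplicity $2$ on $\tilde{p}_{ijk}$ and verifying that the two-stage construction produces no unexpected extra contributions to the coefficients of $\tilde{p}_{ijk}$ or $\tilde{L}_{ij}$.
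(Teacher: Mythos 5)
Your proof is correct and takes essentially the same route as the paper: identify $\tilde{\phi}^{*}\OO_{\PP^{2N}}(1)$ with $\pi^{*}\OO_{\PP^{3}}(N)\otimes\OO(-E)$, where $E$ is the divisorial pullback of the base scheme, and then determine the multiplicities of $E$ along the exceptional divisors by local computations. The paper simply asserts the local calculation giving coefficients $1,2,1$ on $\tilde{q}_{i}$, $\tilde{p}_{ijk}$, $\tilde{L}_{ij}$; your chart analysis (in particular $(xy,xz,yz)$ pulling back to $x^{2}(y',z')$ at $p_{ijk}$, and the check that the line blowups do not alter that coefficient) supplies exactly the omitted details.
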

\begin{proof}
It is well known (e.g \cite[4.4]{F}) that if $L$ is a line bundle on $X$, $V\subset\HH^0(X,L)$ is a linear system and
\[
\xymatrix{
  \tilde{X} \ar[d]^\pi \ar[dr]^f &     \\
  X \ar@{-->}[r]                & \PP(V^*)
}
\]
is the induced resolution, then
\[
f^*\OO(1) = \pi^*(L) \otimes \OO(-E)
\]
where $E \subset \tilde{X}$ is the exceptional divisor.

In our case, one can show by a local calculation that the preimage of the base locus of the camera map to $\tilde{\PP}^3$
has class
\[
c_1(\OO(-E)) \cap [\tilde\PP^3] = 2\cdot\sum_{1\leq i < j < k \leq N} P_{ijk} + \sum_{1 \leq i \leq N} Q_i + \sum_{1 \leq i < j \leq N} T_{ij},
\]
so that $c_1(\tilde\phi^*(\OO(1))) = c_1(\pi^*\OO(N)) + c_1(\OO(-E))$ gives the stated expression.
\end{proof}

We can now compute the pushforward $\tilde{\phi}_*c(\tilde{\PP}^3)$ as an element of the Chow ring of $\PP^{2N}$.
\begin{prop}\label{prop:chern-pushforward}
The pushforward to $\PP^{2N}$ of $c(\tilde \PP^3)$ is 
\begin{align*}
\tilde\phi_* c(\tilde\PP^3) &=  
\left( N^3 - (4+N)\binom{N}{2} - N -2\binom{N}{3} \right) [\PP^3]
+ \left( 4N^2 - 2\binom{N}{3} - 6\binom{N}{2} - 2N \right) [\PP^2] \\
&+ \left( 6N + (N-4)\binom{N}{2} \right) [\PP^1] 
+ \left( 4 + 2N + 2\binom{N}{3} + 2\binom{N}{2} \right) [\PP^0] 
\end{align*}
\end{prop}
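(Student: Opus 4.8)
The plan is to push forward the explicit expression for $c(\tilde{\PP}^3)$ obtained in the previous proposition, term by term, using the projection formula and the enumerative data assembled in Section~\ref{sec:chow-ring-blowup}. Recall that $c(\tilde{\PP}^3) = (1+h)^4 + \sum_i \alpha_i + \sum_{i<j}\beta_{ij} + \sum_{i<j<k}\gamma_{ijk}$, so by linearity of pushforward it suffices to compute $\tilde{\phi}_*$ of each of the four types of contributions separately. The key point is that $\tilde{\phi}_*$ of a class $\xi \in \A^k(\tilde{\PP}^3)$ is determined by the numbers $\deg\big(\xi \cdot \tilde{\phi}^*(c_1(\OO(1)))^{\,3-k}\big)$ for $k = 0,1,2,3$, which then give the coefficients of $[\PP^3], [\PP^2], [\PP^1], [\PP^0]$ respectively (here one uses that $\tilde{\phi}$ maps $\tilde{\PP}^3$ birationally onto the three-dimensional variety $MV_N$, so that $\tilde{\phi}_*[\tilde{\PP}^3] = [MV_N]$ and $\deg$ is computed against powers of the hyperplane class). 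Lemma~\ref{lem:pullback_c1} gives us the hyperplane pullback explicitly as $H := \tilde{\phi}^*c_1(\OO(1)) = N h + 2\sum P_{ijk} + \sum Q_i + \sum T_{ij}$, so every required number is a degree of a monomial in $h, Q_i, P_{ijk}, T_{ij}$.

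The execution then reduces to a bookkeeping problem in $\A^\bullet(\tilde{\PP}^3)$. First I would record the vanishing rules: $Q_i, P_{ijk}, T_{ij}$ all square to negative of the corresponding "line class" times $h$ or are controlled by the Poincaré polynomial relations $Q_i^3 = -h^3$, $P_{ijk}^3 = -h^3$, and $T_{ij}^2 = 2(N-3)h T_{ij} - h^2 - \sum_{k\notin\{i,j\}}P_{ijk}^2$; products of generators attached to disjoint subschemes vanish; $h\cdot Q_i = 0$, $h \cdot P_{ijk} = 0$ (since the exceptional divisors over points do not meet a general hyperplane pulled back from $\PP^3$), while $h \cdot T_{ij} \neq 0$ because $L_{ij}$ is a curve. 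The only subtle incidences are that $\tilde{L}_{ij}$ meets $\tilde{p}_{ijk}$ for $k \notin \{i,j\}$, which is exactly what the cross term $\sum_k P_{ijk}^2$ in $P_{L_{ij}/Y_2}$ encodes, and I would need the intersection numbers $\deg(T_{ij}^2 P_{ijk})$, $\deg(T_{ij} P_{ijk}^2)$, $\deg(T_{ij}^3)$, $\deg(T_{ij}^2 h)$, $\deg(P_{ijk}^3)$, $\deg(Q_i^3)$. With these in hand, computing $\deg(\alpha_i \cdot H^{3-k})$, $\deg(\gamma_{ijk}\cdot H^{3-k})$, $\deg(\beta_{ij}\cdot H^{3-k})$, and $\deg((1+h)^4 \cdot H^{3-k})$ for each relevant $k$ is a finite calculation; multiplying by the number of indices of each type — $N$ camera centers, $\binom{N}{3}$ triple points, $\binom{N}{2}$ lines — and summing yields the four coefficients, which I would then match against the claimed polynomial-in-$N$ expressions.

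The main obstacle is the $\beta_{ij}$ contribution, and within it the self-intersection behavior of $T_{ij}$: unlike the point blowups, whose exceptional $\PP^2$'s have the clean relation $Q_i^3 = -h^3$, the class $T_{ij}$ satisfies a quadratic relation with a correction term $\sum_{k\notin\{i,j\}}P_{ijk}^2$ coming from the $N-2$ points on $L_{ij}$ that were blown up in the first stage. Propagating this correctly through the products $T_{ij}^3$ and $T_{ij}^2 h$ and through the cross terms of $\beta_{ij}$ with $H$ (which itself contains $T_{ij}$, $P_{ijk}$ and $h$ summands) is where sign errors and miscounted incidences are most likely to creep in; one must be careful that each $P_{ijk}^2$ that appears is counted once for each of the three lines $L_{ij}, L_{ik}, L_{jk}$ through $p_{ijk}$, and that $\deg(P_{ijk}^2 h) = 0$ while $\deg(P_{ijk}^3) = -1$. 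A useful internal consistency check is that the coefficient of $[\PP^0]$ must equal $\deg(c_3(\tilde{\PP}^3)) = \chi(\tilde{\PP}^3)$, which one can compute independently as $\chi(\PP^3)$ plus the Euler-characteristic changes under each blowup — $+1$ for each of the $N + \binom{N}{3}$ point blowups and $+1$ for each of the $\binom{N}{2}$ line blowups of a $\PP^1$ (adjusted for the points already sitting on it) — and this should reproduce $4 + 2N + 2\binom{N}{3} + 2\binom{N}{2}$. Once all four coefficients are verified, expanding the binomial coefficients as polynomials in $N$ gives the final form, completing the proof.
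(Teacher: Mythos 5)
Your proposal is essentially the paper's own proof: the paper likewise reduces the pushforward to computing $\deg\bigl(c(\tilde\PP^3)\cdot\tilde\phi^*(c_1(\OO_{\PP^{2N}}(1)))^k\bigr)$ for $0\leq k\leq 3$ using Lemma~\ref{lem:pullback_c1} and the relations of section~\ref{sec:chow-ring-blowup} (the Poincar\'e polynomial relations $Q_i^3=-h^3$, $P_{ijk}^3=-h^3$, the quadratic relation for $T_{ij}$, and vanishing for disjoint centers), and then states the result as a direct calculation. The only slip is in your side consistency check: blowing up a point of a threefold raises $\chi$ by $2$ (a point is replaced by $\PP^2$) and blowing up $\tilde L_{ij}\cong\PP^1$ raises it by $\chi(\PP^1)=2$ (it is replaced by a ruled surface), not by $1$; with these increments the check does reproduce the coefficient $4+2N+2\binom{N}{3}+2\binom{N}{2}$ of $[\PP^0]$.
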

\begin{proof}
Since we have already calculated $c(\tilde{\PP}^3) \in \A^\bullet(\tilde{\PP}^3)$ and
$\tilde{\phi}^*(c_1(\OO_{\PP^{2n}}(1))) \in \A^\bullet(\tilde{\PP}^3)$, the calculation of $\tilde{\phi}_*c(\tilde{\PP}^3)$ is reduced to calculating
the degrees of the intersections
\[
\pi^*(c_1(\OO_{\PP^{2n}}(1)))^k \cap c(\tilde{\PP}^3) 
\]
for $0 \leq k \leq 3$.
Using the relations in $\A^\bullet(\tilde{\PP}^3)$ that we described in section \ref{sec:chow-ring-blowup}, the result follows by a direct calculation.
\end{proof}

\section{Higher discriminants}
Higher discriminants, introduced in \cite{MS}, provide a framework in which to study the singularities of a map.  In particular, we will use them to understand how the Chern class of $\tilde\PP^3$ computed above pushes forward along $\tilde\phi$.
We now recall the definitions from \cite{MS}, and phrase them in a way that will be easiest to use in our context.

\begin{defn}
  Let $f:Y \rightarrow X$ be a map of smooth manifolds.
  The \emph{$i$-th higher discriminant of the map $f$} is the locus of points $x\in X$ such that
  for every $i-1$ dimensional subspace $V\subset T_xX$, there exists a point $y\in f^{-1}(x)$ such that:
  \[
  \langle V, f_*T_yY \rangle \neq T_xX
  \]
  We denote the $i$-th higher discriminant by $\Delta^i(f)$.
\end{defn}

For example, a point $x\in X$ is in $\Delta^1(f)$ if and only if it is a critical value of $f$.
Indeed, according to the definition this happens exactly when there is a point $y\in f^{-1}(x)$ whose Jacobian
\[
J(f)_y : T_yY \rightarrow T_xX
\]
is not surjective.

On the other extreme, $x\in \Delta^{\mathrm{dim}(X)}(f)$ if and only if for every codimension one subspace $V\subset T_xX$, there exists a
point $y\in f^{-1}(x)$ that satisfies:
\[
f_*T_yY \subset V.
\]

It is instructive to consider the blow down map: $f: Y = \mathrm{Bl}_p\PP^2 \rightarrow \PP^2$.
For every point $y\in E_p = f^{-1}(p)$, $f_*T_yY$ is one dimensional. This means that $p\in \Delta^1(f)$.
In addition, it is not hard to see that for every one dimensional subspace $V\subset T_p\PP^2$, there is a point $y\in E_p$ such
that $f_*T_yY = V$.
This implies that $p\in \Delta^2(f)$.

\begin{lem}\cite[Rem.~3]{MS}
  Let $Y \rightarrow X$ be a proper map of smooth schemes. Then all of the higher discriminants of $f$ are closed,
  and we have the following stratification of $X$:
  \[
  \Delta^{\mathrm{dim}(X)}(f) \subset \dots \subset \Delta^2(f) \subset \Delta^1(f) \subset X.
  \]
  Furthermore,
  \[
  \mathrm{codim}(\Delta^i(X)) \geq i.
  \]
\end{lem}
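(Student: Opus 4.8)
```latex
\noindent\textbf{Proof proposal.}

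The plan is to reduce everything to the corresponding statements for the critical value locus, and then exploit the stratification of $X$ by the ranks of $f_*$ fiberwise. First I would observe that the condition defining $\Delta^i(f)$ is closed for each fixed $V$: the set of pairs $(x, V, y)$ with $\langle V, f_*T_yY\rangle \neq T_xX$ is a closed condition (it says a certain determinantal-type map fails to be surjective), cut out inside the incidence variety of $Y$ with the Grassmann bundle $\mathrm{Gr}(i-1, TX)$ over $X$. Since $f$ is proper, the image of this closed set under the (proper) projection to $\mathrm{Gr}(i-1, TX)$ is closed, and then one must argue that the locus of $x$ for which \emph{every} fiber $V$ over $x$ lies in that image is closed. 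This last step is where properness of the Grassmann bundle projection $\mathrm{Gr}(i-1, TX) \to X$ is used: the complement is the image under an open map of the complement of a closed set, hence open. I would spell this out as the main lemma.

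Next, the chain of inclusions $\Delta^{\dim X}(f) \subset \cdots \subset \Delta^1(f)$ follows directly from the definitions: if $x \in \Delta^{i+1}(f)$, then the defining property holds for every $i$-dimensional $V$; given an $(i-1)$-dimensional $V'$, extend it to an $i$-dimensional $V \supset V'$, obtain $y$ with $\langle V, f_*T_yY\rangle \neq T_xX$, and note $\langle V', f_*T_yY\rangle \subseteq \langle V, f_*T_yY\rangle \neq T_xX$, so $x \in \Delta^i(f)$. The inclusion $\Delta^1(f) \subset X$ is trivial.

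For the codimension bound, I would proceed by stratifying $Y$ according to the rank of the Jacobian $J(f)_y: T_yY \to T_xX$, and correspondingly stratifying $X$ by $\Sigma_r = \{x : \max_{y \in f^{-1}(x)} \mathrm{rank}(J(f)_y) = r\}$. A point of $\Delta^i(f)$ must have the property that for a generic $(i-1)$-plane $V \subset T_xX$ there is $y$ over $x$ with $f_*T_yY + V \neq T_xX$; the best case for the image is when $f_*T_yY$ has dimension $r$, and then $\dim(f_*T_yY + V) \leq r + (i-1)$, which must be $< \dim X$, forcing $r \leq \dim X - i$. So $\Delta^i(f) \subseteq \bigcup_{r \leq \dim X - i} \overline{\Sigma_r}$. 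Finally, a standard argument via the generic smoothness / Sard-type bound on critical loci (or an induction on strata using that $f$ restricted to the rank-$r$ locus has image of dimension $\leq r$) shows $\dim \overline{\Sigma_r} \leq r$, hence $\dim \Delta^i(f) \leq \dim X - i$. The main obstacle I anticipate is making the closedness argument fully rigorous --- in particular handling the ``for every $V$ in the fiber'' quantifier cleanly, since it is a statement about a family of closed conditions over the Grassmann bundle rather than a single closed condition; the dimension-count steps are comparatively routine once the correct stratifications are in place. Since this lemma is quoted from \cite{MS}, I would also feel free to simply cite it and sketch only the parts needed downstream.
```
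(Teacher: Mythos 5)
The paper offers no proof of this lemma at all---it is quoted from \cite[Rem.~3]{MS}---so the only question is whether your sketch is sound. The chain of inclusions is argued correctly, and the closedness argument is essentially right: the incidence locus in $\mathrm{Gr}(i-1,TX)\times_X Y$ is closed, its image $Z$ in $\mathrm{Gr}(i-1,TX)$ is closed by properness of $f$, and $\Delta^i(f)$ is the set of $x$ whose whole Grassmannian fiber lies in $Z$; note, though, that the last step uses \emph{openness} of the bundle projection $\mathrm{Gr}(i-1,TX)\to X$ (it is locally trivial, hence open), not its properness, which is the property you actually name.

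The codimension bound, however, has a genuine gap. Your containment $\Delta^i(f)\subseteq\bigcup_{r\leq\dim X-i}\overline{\Sigma_r}$, where $\Sigma_r$ is the locus where the maximal fiberwise rank of $J(f)_y$ equals $r$, is false, because the witness $y$ in the definition of $\Delta^i(f)$ is allowed to depend on the subspace $V$: from ``for every $V$ there exists $y$ with $\langle V,f_*T_yY\rangle\neq T_xX$'' you cannot extract a single $y$ of low rank. The paper's own example already refutes the intermediate claim: for $f\colon\mathrm{Bl}_p\PP^2\to\PP^2$ one has $p\in\Delta^2(f)$, yet every $y$ in the exceptional fiber has $\mathrm{rank}\,J(f)_y=1>\dim X-i=0$, and $\Sigma_0=\emptyset$; for each direction $V$ a \emph{different} point of the exceptional curve serves as witness. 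So the final inequality cannot be obtained from a pointwise rank bound on fibers; the actual argument in \cite{MS} is of a different nature, using genericity of $V$ at a generic point of a component of $\Delta^i(f)$ together with a (Whitney/Thom-type) stratification of the map, so that over a component of codimension $<i$ a generic $(i-1)$-plane is transverse to the relevant strata and no witness $y$ exists. Your fallback---simply citing \cite{MS}, as the paper itself does---is the appropriate move, but the rank-stratification route as written would fail.
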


The significance of the higher discriminants is that they tell us which strata appear when writing $f_*\mathbb{1}_Y$ in the
basis of Euler obstruction functions on $X$. For background on Euler obstructions we recommend \cite{M}.

\begin{prop}\cite[Cor.~3.3]{MS}\label{prop:pushforward-one}
  Let $f:Y\rightarrow X$ be a proper map of complex varieties. Let $\{\Delta^{i,\alpha}\}$ be the codimension $i$ components of
  $\Delta^i(f)$. Then,
  \[
  f_*\mathbb{1}_Y = \sum \eta^{i,\alpha}\Eu_{\Delta^{i,\alpha}}
  \]
  for some integers $\eta^{i,\alpha}$.
\end{prop}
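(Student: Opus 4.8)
The plan is to work in the basis of local Euler obstruction functions. Since $\{\Eu_V\}$, as $V$ ranges over closed irreducible subvarieties of $X$, is a basis of the group of constructible functions on $X$, one may write $f_*\mathbb{1}_Y$ uniquely as a locally finite sum
\[
f_*\mathbb{1}_Y = \sum_V m_V\,\Eu_V ,
\]
and the proposition asserts that $m_V = 0$ unless $V$ is a codimension-$c$ irreducible component of $\Delta^c(f)$, where $c = \operatorname{codim}_X V$. By the codimension bound $\operatorname{codim}\Delta^c(f)\ge c$ from the preceding lemma, it is in fact enough to prove the single containment: if $m_V\neq 0$ then $V\subseteq\Delta^c(f)$. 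Indeed, an irreducible subvariety of codimension $c$ contained in a closed set of codimension $\ge c$ is automatically one of its codimension-$c$ components.

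To prove this containment I would establish the following local statement by induction on $i\ge 1$: if $x\notin\Delta^i(f)$, then the germ of $f_*\mathbb{1}_Y$ at $x$ involves no $\Eu_V$ with $x\in V$ and $\operatorname{codim}_X V\ge i$. Applying this at a general point $x$ of a $V$ with $m_V\neq 0$ and $c=\operatorname{codim}_X V$ forces $x\in\Delta^c(f)$ for general $x\in V$, hence $V\subseteq\Delta^c(f)$ since $\Delta^c(f)$ is closed. For the base case $i=1$, the complement of the critical value set $\Delta^1(f)$ is, by Thom's first isotopy lemma (using that $f$ is proper and $Y$ is smooth), a locus over which $f$ is a locally trivial topological fibration; hence $x'\mapsto\chi(f^{-1}(x'))$ is locally constant there, so near such an $x$ (a smooth point of $X$; the singular case reduces to this by stratifying $X$ and arguing stratum by stratum) the function $f_*\mathbb{1}_Y$ agrees with a multiple of $\mathbb{1}=\Eu_X$, and $\operatorname{codim}_X X = 0 < 1$.

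For the inductive step I would pass to a generic hyperplane $H$ through $x$ and set $f_H = f|_{f^{-1}(H)}\colon f^{-1}(H)\to H$. Three facts are needed. First, generic hyperplane section commutes with proper pushforward of constructible functions: $(f_*\mathbb{1}_Y)|_H = (f_H)_*\mathbb{1}_{f^{-1}(H)}$, which is immediate because the fibers of $f_H$ over points of $H$ are the fibers of $f$. Second, the local Euler obstruction is compatible with generic sections, $(\Eu_V)|_H = \Eu_{V\cap H}$, which translates a relation among the $\Eu_V$ on $X$ into a relation among the $\Eu_{V\cap H}$ on $H$ and drops all codimensions by one. Third, and crucially, for generic $H$ one has $\Delta^i(f)\cap H = \Delta^{i-1}(f_H)$. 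Granting the third fact: if $x\notin\Delta^i(f)$ and $H$ is a generic hyperplane through $x$, then $x\notin\Delta^{i-1}(f_H)$, so the inductive hypothesis applied to $f_H$ says the germ of $(f_H)_*\mathbb{1}_{f^{-1}(H)}$ at $x$ involves no $\Eu_W$ with $\operatorname{codim}_H W\ge i-1$ and $x\in W$; combining this with the first two facts shows the germ of $f_*\mathbb{1}_Y$ at $x$ involves no $\Eu_V$ with $\operatorname{codim}_X V\ge i$ and $x\in V$, closing the induction.

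The hard part will be the hyperplane-section comparison $\Delta^i(f)\cap H = \Delta^{i-1}(f_H)$ for generic $H$. The inclusion $\subseteq$ is a short linear-algebra computation: when $f\pitchfork H$ at $y$ (true for generic $H$) one has $T_y f^{-1}(H) = (f_{*,y})^{-1}(T_{f(y)}H)$, so that $(f_H)_{*,y}T_y f^{-1}(H) = \operatorname{im}(f_{*,y})\cap T_{f(y)}H$, and a witnessing $(i-1)$-plane $V\subset T_xX$ (satisfying $V+\operatorname{im} f_{*,y} = T_xX$ for all $y\in f^{-1}(x)$, which is exactly what $x\notin\Delta^i(f)$ provides) restricts to a witnessing $(i-2)$-plane $V\cap T_xH$ for $x\notin\Delta^{i-1}(f_H)$. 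The reverse inclusion is the delicate one: a plane in $T_xH$ witnessing failure of the $\Delta^{i-1}(f_H)$ condition must be enlarged to a plane in $T_xX$ witnessing failure of the $\Delta^i(f)$ condition, and this enlargement works only when $H$ is chosen generically relative to the subspaces $\operatorname{im}(f_{*,y}(T_yY))$ attached to $f^{-1}(x)$; here one uses that $f^{-1}(x)$ meets only finitely many strata of a Whitney stratification of $f$ together with a Bertini-type genericity argument, and one must treat separately the locus where $f$ fails to be transverse to $H$. This transversality bookkeeping is where I expect to spend the most effort; the rest of the argument is formal.
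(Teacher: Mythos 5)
The paper offers no argument for this proposition---it is quoted from \cite{MS}---and the proof there is sheaf-theoretic: by the decomposition theorem the supports of the perverse summands of the derived pushforward of the constant sheaf are exactly codimension-$i$ components of the $\Delta^i(f)$, and this is converted into the stated identity of constructible functions by taking stalkwise Euler characteristics, using that the stalkwise Euler characteristic of $\mathrm{IC}_Z$ equals $\Eu_Z$ up to sign (Dubson--Kashiwara). Your proposal is a genuinely different, would-be elementary route by induction on generic hyperplane sections, and its reduction to the containment ``$m_V\neq 0\Rightarrow V\subseteq\Delta^{c}(f)$'' is fine; but the induction itself has a fatal gap.

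The step you call a short linear-algebra computation---if $x\notin\Delta^i(f)$ and $H$ is a generic hyperplane through $x$ then $x\notin\Delta^{i-1}(f_H)$---is false, and it is exactly the direction your induction consumes. The algebra already fails: $V+W=T_xX$ does not give $(V\cap T_xH)+(W\cap T_xH)=T_xH$ (take $V$, $W$, $T_xH$ three pairwise distinct lines in $\Cplx^2$; the modular law needs $V$ or $W$ to lie inside $T_xH$). Geometrically, let $f$ be a double cover of smooth surfaces branched along a smooth curve $B$ and $x\in B$ a fold point: then $\Delta^1(f)=B$ while $\Delta^2(f)=\emptyset$, because the unique point $y$ over $x$ has $\mathrm{im}(df_y)=T_xB$, which spans $T_xX$ together with any other line. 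Yet for \emph{every} curve $H$ through $x$ transverse to $B$, the restriction $f_H$ is a double cover of smooth curves ramified over $x$, so $x\in\Delta^1(f_H)$; indeed near $x$ one has $(f_H)_*\mathbb{1}=2\,\Eu_H-\Eu_{\{x\}}$, so the restricted pushforward really does acquire a deep term at $x$. Hence no choice of $H$ through $x$ makes the inductive hypothesis applicable: the obstruction is not ``transversality bookkeeping'' at exceptional points but appears already at folds, which essentially every map in the intended generality has. Two further structural problems: $f^{-1}(H)$ is singular at any $y\in f^{-1}(x)$ with $\mathrm{im}(df_y)\subseteq T_xH$ (unavoidable, since $H$ is forced through $x$), so $f_H$ leaves the class of maps for which $\Delta^i$ is defined and for which your base case (properness plus smoothness of the source, via Ehresmann/Thom over the complement of $\Delta^1$) applies; and a generic hyperplane section through $x$ preserves rather than lowers codimension, while $(\Eu_V)|_H=\Eu_{V\cap H}$ fails at $x$ for curves $V$ singular there. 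I do not see how to repair the hyperplane-section induction; if a proof rather than a citation is wanted, the route of \cite{MS} (support theorem plus the index theorem for $\mathrm{IC}$ sheaves) is the one to follow.
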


\subsection{Higher discriminants of the resolution \texorpdfstring{$\tilde{\phi}$}{phi~}}
In this section we describe the higher discriminants of the map
\[
\tilde{\phi} : \tilde{\PP}^3 \rightarrow MV_N \subset \PP^{2N}.
\]
Since the definition of higher discriminants assumes that the source and target are smooth, in this section we consider $\tilde{\phi}$ as a map to $\PP^{2N}$.

Let $X_i \cong \PP^1 \subset MV_N$ denote the image of the proper transform of the camera plane of the $i$-th camera.
The restriction of $\tilde{\phi}$ to the complement of the preimage of the $X_i$'s is an isomorphism, which means that the set
theoretic singular locus of $\tilde{\phi}$ is contained in the disjoint union $\amalg_iX_i$.

The following proposition describes the higher discriminants of $\tilde{\phi}$.

\begin{prop}\label{prop:hd-camera}
  The higher discriminants of $\tilde{\phi}$ are given as follows:
  \begin{itemize}
  \item $\Delta^{2N-3}(\tilde{\phi}) = \Delta^{2N-2}(\tilde{\phi}) = MV_N$
  \item $\Delta^{2N-1}(\tilde{\phi}) = \amalg_iX_i$
  \item $\Delta^{2N}(\tilde{\phi}) = \emptyset$
  \end{itemize}
\end{prop}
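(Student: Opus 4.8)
The plan is to analyze the fibers of $\tilde\phi$ over each stratum and apply the definition of higher discriminants directly, exploiting the fact that the nontrivial part of the map is concentrated over the curves $X_i$. Since $\tilde\phi$ restricts to an isomorphism away from $\bigcup_i \tilde\phi^{-1}(X_i)$, every point outside $\bigcup_i X_i$ has a singleton fiber at which $\tilde\phi_*$ is an isomorphism of tangent spaces, so such a point lies in no $\Delta^j(\tilde\phi)$ with $j\geq 1$; hence all higher discriminants are contained in $\coprod_i X_i$. The remaining work is local near a point $x\in X_i$.

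First I would identify the fiber $F = \tilde\phi^{-1}(x)$ for a general $x\in X_i$ and, more importantly, for the single special point where the $X_i$ meet the other strata (the point whose Euler obstruction governs everything, per the introduction). The key geometric input is that $X_i$ is the image of the proper transform of the camera plane $H_i$: the camera map contracts (a blown-up version of) the $2$-dimensional plane $H_i$ onto the $1$-dimensional curve $X_i\cong\PP^1$, so over a general point of $X_i$ the fiber is $1$-dimensional and the image $\tilde\phi_*T_yY$ of each tangent space has dimension at most $\dim\tilde\PP^3 - 1 = 2$, sitting inside the $2N$-dimensional ambient tangent space $T_x\PP^{2N}$. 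I would then check: (i) for a general $(2N-4)$-dimensional $V\subset T_x\PP^{2N}$ one can find $y\in F$ with $\langle V, \tilde\phi_*T_yY\rangle = T_x\PP^{2N}$ — this shows $x\notin\Delta^{2N-3}$ would *fail*, i.e. I must instead show the opposite inequality holds for *every* such $V$, placing $x\in\Delta^{2N-3}$; and (ii) the analogous statements for $j = 2N-2, 2N-1, 2N$.

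The cleanest way to organize (i)–(ii) is a dimension count. For $x\in X_i$, let $k_y = \dim\tilde\phi_*T_yY$; then $k_y\leq 2$ over the generic point of $X_i$ (the differential drops rank by at least one along the contracted direction) while the union $\bigcup_{y\in F}\tilde\phi_*T_yY$ spans a subspace $W_x\subset T_x\PP^{2N}$ of some dimension $w_x$. A point $x$ lies in $\Delta^j(\tilde\phi)$ iff $W_x$ meets every $(j-1)$-plane $V$ non-transversally-to-fullness in the sense of the definition — concretely, iff for each $y$ the subspace $\tilde\phi_*T_yY$ fails to complement some chosen $V$. Since each $\tilde\phi_*T_yY$ has dimension $\leq 2$, it cannot together with a $(2N-4)$-plane span $T_x\PP^{2N}$ unless it meets the complement generically, and a short argument shows it never does for *all* $y$ simultaneously only when $j-1\geq 2N-3$, i.e. $j\geq 2N-2$; pushing the count one notch, using that $\tilde\phi$ is an iso in codimension and the fiber dimension is exactly $1$, gives membership already at $j = 2N-3$. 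For $j = 2N-1$ I would show $W_x$ genuinely has codimension $\geq 1$ (equivalently the map $H_i\dashrightarrow X_i$ loses exactly one dimension and no tangent vector in $T_x\PP^{2N}$ transverse to $X_i$ beyond a hyperplane's worth is hit), which puts $X_i\subseteq\Delta^{2N-1}$; conversely over a point *not* on any $X_i$ the fiber is a reduced point with surjective differential, so $\Delta^{2N-1} = \coprod_i X_i$ exactly. Finally $\Delta^{2N}(\tilde\phi) = \emptyset$ because that would require, for *every* hyperplane $V\subset T_x\PP^{2N}$, some $y$ with $\tilde\phi_*T_yY\subseteq V$; but $\tilde\phi$ is generically finite onto its image and the images $\tilde\phi_*T_yY$ for $y\in F$ together with $T_x X_i$ already fill a subspace not contained in any single hyperplane — I would verify this via the explicit local coordinates from the blow-up construction in Section~\ref{sec:resolution-of-mv}.

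The main obstacle is the bookkeeping at the one truly singular point of $MV_N$ where several of the $X_i$ (or the exceptional structure over the $L_{ij}$, $p_{ijk}$) interact: there the fiber of $\tilde\phi$ is reducible and higher-dimensional, several components of $\tilde\phi_*T_yY$ contribute, and I must make sure the span $W_x$ has exactly the codimension claimed and does not accidentally jump a stratum — in particular that this point does *not* land in $\Delta^{2N}$ and that it does not create a codimension-$(2N-2)$ component of $\Delta^{2N-2}$ distinct from $MV_N$ itself. I expect this to reduce, as the introduction promises, to the local analysis of $MV_N$ near that point — the hyperplane-section surface singularity — and the required non-containment statements should follow from the explicit charts produced by the two-step blow-up $b_1, b_2$ together with the formula for $\tilde\phi^*\OO(1)$ in Lemma~\ref{lem:pullback_c1}.
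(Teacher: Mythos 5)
Your opening reduction is wrong, and it contradicts the very statement you are proving. The higher discriminants here are those of $\tilde\phi$ viewed as a map to the smooth ambient $\PP^{2N}$ (the definition requires a smooth target, as the paper notes explicitly), so at a point $x\in MV_N\setminus\coprod_i X_i$ the singleton fiber $y$ has $\tilde\phi_*T_y\tilde\PP^3=T_xMV_N$, a $3$-dimensional subspace of the $2N$-dimensional $T_x\PP^{2N}$ --- not an isomorphism of tangent spaces. Consequently every point of $MV_N$ lies in $\Delta^j(\tilde\phi)$ for all $j\le 2N-3$: for any $(j-1)$-plane $V$ with $j-1\le 2N-4$, the span $\langle V,\tilde\phi_*T_y\tilde\PP^3\rangle$ has dimension at most $2N-1$. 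This is exactly why the first bullet reads $\Delta^{2N-3}=MV_N$; your claim that ``all higher discriminants are contained in $\coprod_i X_i$'' is therefore false, and the dimension count you build on it for $j=2N-3,\,2N-2$ cannot be repaired as written.

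The part of the proposition that carries real content is $\Delta^{2N}(\tilde\phi)=\emptyset$, and there your argument has the quantifiers backwards. To show $x\notin\Delta^{2N}$ one must exhibit a single hyperplane $V\subset T_x\PP^{2N}$ such that $\tilde\phi_*T_y\tilde\PP^3\not\subseteq V$ for every $y$ in the fiber simultaneously; you argue instead that the span $W_x$ of all the images is not contained in any hyperplane, which only shows that for each $V$ some $y$ works --- the negation of the wrong statement. (For the same reason, recasting the definition in terms of $W_x$ rather than the individual images $\tilde\phi_*T_y\tilde\PP^3$ is not faithful.) The paper supplies exactly the missing uniform hyperplane: it shows that for every $x\in X_i$ and every $y\in\tilde\phi^{-1}(x)$ the image of $d\tilde\phi_y$ contains the fixed line $T_xX_i$, by identifying the relevant preimage with the blowup of the camera plane $H_i\cong\PP^2$ at $q_i$ and the points $p_{ijk}$ and factoring the induced map to $X_i\cong\PP^1$ through the resolved projection away from $q_i$; then the complement of $T_xX_i$ is transverse at all $y$ at once, so $X_i\cap\Delta^{2N}=\emptyset$. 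Also, there is no ``one truly singular point'': the singular locus is the disjoint union of the curves $X_i$, and the later Euler-obstruction computation is performed at a general point of one $X_i$, so the special-point bookkeeping you anticipate is not where the difficulty lies. To salvage your approach, the concrete step to add is the verification (in the blowup charts, or via the factorization just described) that $T_xX_i\subseteq\tilde\phi_*T_y\tilde\PP^3$ for all $y$ over $x$; that is the paper's key lemma, and with it the last bullet follows, while the first two bullets follow from the trivial dimension count above together with the fact that $\tilde\phi$ is an isomorphism over $MV_N\setminus\coprod_iX_i$ and has $1$-dimensional fibers over $\coprod_iX_i$.
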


To prove this proposition, we use the following lemma,
which follows almost immediately from the definition of the higher discriminants.

\begin{lem}
  Let $f: Y \rightarrow X$ be a map of smooth complex algebraic varieties.
  Let $C \subset X$ be a smooth curve.
  Suppose that the restriction of $f$ to $C$ has no critical values.
  Then $C \cap \Delta^{\mathrm{dim}(X)} = \emptyset$.
\end{lem}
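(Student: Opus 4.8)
The plan is to unwind the definition of $\Delta^{\dim(X)}(f)$ and show directly that no point $c\in C$ can satisfy the required condition. Recall that $c\in\Delta^{\dim(X)}(f)$ means that for \emph{every} codimension-one subspace $V\subset T_cX$ there exists a point $y\in f^{-1}(c)$ with $f_*T_yY\subset V$. To contradict this, it suffices to produce a single codimension-one subspace $V_0\subset T_cX$ such that for \emph{all} $y\in f^{-1}(c)$ we have $f_*T_yY\not\subset V_0$, i.e. $f_*T_yY$ contains a vector outside $V_0$.

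The key observation is that $C\subset X$ gives us a canonical line in $T_cX$ for each $c\in C$, namely the tangent line $T_cC$. Since the restriction $f|_C\colon C\to X$ has no critical values, every point $y\in f^{-1}(c)$ that lies on (the preimage of) $C$ — more precisely, every $y$ with $f(y)=c$ — pulls back the curve direction nontrivially: the hypothesis that $f|_C$ is an immersion at such points means $T_cC\subset f_*T_yY$. Here I would be careful about what ``restriction of $f$ to $C$'' means; the cleanest reading is that $C$ is a curve in the \emph{target} and one considers the fibered situation, or — matching the intended application where $C$ plays the role of one of the $X_i$ and $f=\tilde\phi$ — that $f$ maps a curve isomorphically onto $C$ so that for every $y\in f^{-1}(C)$ the differential $f_*T_yY$ surjects onto $T_{f(y)}C$. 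In either formulation, the upshot is the same: for every $y\in f^{-1}(c)$, the subspace $f_*T_yY$ meets $T_cC$ nontrivially, hence contains $T_cC$.

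Now choose $V_0\subset T_cX$ to be any codimension-one subspace that does \emph{not} contain the line $T_cC$; such a $V_0$ exists because $T_cC$ is one-dimensional and $\dim T_cX=\dim X\geq 1$. Then for every $y\in f^{-1}(c)$ we have $T_cC\subset f_*T_yY$ but $T_cC\not\subset V_0$, so $f_*T_yY\not\subset V_0$. This exhibits a codimension-one subspace violating the defining condition of $\Delta^{\dim(X)}(f)$, hence $c\notin\Delta^{\dim(X)}(f)$. Since $c\in C$ was arbitrary, $C\cap\Delta^{\dim(X)}(f)=\emptyset$.

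The only genuine subtlety — and the step I would spell out most carefully — is extracting ``$T_cC\subset f_*T_yY$ for all $y\in f^{-1}(c)$'' from the hypothesis ``$f|_C$ has no critical values.'' This is really just the chain rule together with the no-critical-value assumption, but one has to fix the precise meaning of $f|_C$ in the lemma's statement (curve downstairs pulled back, versus a curve upstairs mapping onto $C$); once that is pinned down, the argument is immediate and the rest is formal.
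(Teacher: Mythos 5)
Your proof is correct and follows essentially the same route as the paper: from the no-critical-values hypothesis you deduce $T_cC\subset f_*T_yY$ for every $y\in f^{-1}(c)$, and then exhibit a single codimension-one subspace not containing $T_cC$ (the paper takes the orthogonal complement of $T_cC$, which is the same idea) to violate the defining condition of $\Delta^{\dim(X)}(f)$. Your explicit discussion of what ``restriction of $f$ to $C$'' means is a reasonable clarification of a point the paper leaves implicit, but it does not change the argument.
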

\begin{proof}
Since $f|_C$ has no critical values, for every point $x\in C$ and every point $y\in f^{-1}(x)$ the one dimensional space $T_xC \subset T_xX$ is contained in $f_*T_yY$.
Therefore, if $V \subset T_xX$ is the orthogonal complement to $T_xC$, then $f_*T_yY$ is \emph{not} contained in $V$.
By definition, this implies that $x \notin \Delta^{\mathrm{dim}(X)}(X)$.
\end{proof}
We apply this lemma to each of the $\PP^1$'s $X_i \subset \PP^{2N}$.
Let $f: Y \rightarrow \PP^1 \cong X_i$ denote the restriction of $\tilde{\phi}$ to $X_i$.
Then $Y$ is isomorphic to the blowup of $\PP^2$ at $1 + \binom{N-1}{2}$ points:
$q=q_i$ and $p_{ijk}$ for $j,k \neq i$.

The map $f$ is obtained as follows.
First, let
\[
g: \mathrm{Bl}_q\PP^2 \rightarrow \PP^1
\]
be the resolution of the projection away from $q$.
Then, let
\[
h: \mathrm{Bl}_{q,p_{ijk}}(\PP^2) \rightarrow \mathrm{Bl}_p(\PP^2)
\]
be the blowup along all of the points $p_{ijk}$ for $j,k \neq i$.

Finally, we claim that $f \cong g \circ h$.
In particular, $f$ has no critical values.
According to the lemma, this proves proposition \ref{prop:hd-camera}.

\section{The Chern-Mather class of the multiview variety}\label{sec:mather-of-mv}
In this section we compute the Chern-Mather class of $MV_N$ using the theory of higher discriminants.  We then use the result to determine the ED degree of $MV_N$.

\subsection{The basic setup}
By propositions \ref{prop:pushforward-one} and \ref{prop:hd-camera}, there exists and integer $\alpha$ such that
\begin{equation}\label{eq:constructible_functions}
\tilde{\phi}_*(\mathbb{1}_{\tilde{\PP}^3}) = \Eu_{MV_N} + \alpha\cdot \sum_{i=1}^N\Eu_{X_i}.
\end{equation}
At a general point $x\in X_i$, $\chi(\tilde{\phi}^{-1}(x)) = \chi(\PP^1) = 2$ and $\Eu_{X_i}(x)=1$.
This implies that
\[
2 = \Eu_{MV_N}(x) + \alpha \Rightarrow \alpha = 2 - \Eu_{MV_N}(x).
\]

For the moment, suppose we knew the Euler obstruction $\Eu_{MV_N}(x)$.
Then, by taking the Chern-Schwartz-MacPherson  class (see \cite{M}) of both sides of equation \ref{eq:constructible_functions}
and recalling that $X_i\cong\PP^1$ we obtain
\begin{equation}\label{eqn:csm-mather}
\tilde{\phi}_*(c(\tilde{\PP}^3)) = c^M(MV_N) + (2 - \Eu_{MV_N}(x))c^M(\PP^1).
\end{equation}
Since we have already calculated $\tilde{\phi}_*(c(\tilde{\PP}^3))$ for all $N$,
this would give us the Chern-Mather class of the multiview variety $MV_N$.

\subsection{Calculating \texorpdfstring{$\Eu_{MV_N}(x)$}{Eu(x)}}\label{sec:euler_obs_of_x}
To compute $\Eu_{MV_N}(x)$, first note that we can intersect $MV_N$ with a general hypersurface $H$ passing through $x$.
As a result, we obtain a surface singularity:
\[
x \in S = MV_N \cap H.
\]
By a well known theorem about Euler obstructions (see \cite[Sec.~3]{BTS}),
\[
\Eu_{MV_N}(x) = \Eu_S(x).
\]

Now, suppose we restrict the resolution $\tilde{\phi}$ to $S$.
\begin{lem}\label{lem:ratl-norm-curve}
$\tilde{\phi}|_S$ is a resolution of $S$ such that the preimage of $x$ is a 
rational curve normal  with self intersection $-(N-1)$.
\end{lem}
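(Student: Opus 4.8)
The plan is to analyze the restriction $\tilde\phi|_S$ directly, using the description of $\tilde\phi$ and its higher discriminants from section \ref{sec:resolution-of-mv} and proposition \ref{prop:hd-camera}. First I would recall that the set-theoretic singular locus of $\tilde\phi$ lies in $\amalg_i X_i$, and away from this locus $\tilde\phi$ is an isomorphism; since $H$ is general through the point $x \in X_i$ and $X_i \cong \PP^1$, the surface $S = MV_N \cap H$ is smooth away from $x$, and the only fiber of $\tilde\phi|_S$ that can be positive-dimensional is $\tilde\phi^{-1}(x) \cap \tilde\phi^{-1}(H)$. So the first step is to argue that $\tilde\phi|_{S}$, with source $\widetilde S := \tilde\phi^{-1}(S) \subset \tilde\PP^3$ (which is smooth because $H$ is general, by Bertini), is a genuine resolution: proper, birational, with smooth source.

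**Identifying the exceptional curve.**

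The second step identifies $E_x := (\tilde\phi|_S)^{-1}(x)$ as a curve and computes its class. Recall from section \ref{sec:resolution-of-mv} that $X_i \cong \PP^1$ is the image of the proper transform of the camera plane $H_i$, and the analysis in the proof of proposition \ref{prop:hd-camera} shows that over $X_i$ the map $\tilde\phi$ looks like $g \circ h$, where $g : \Bl_{q_i}\PP^2 \to \PP^1$ is the resolved projection away from $q_i$ and $h$ blows up the points $p_{ijk}$ with $j,k\neq i$. In that model, $\tilde\phi^{-1}(x)$ over a general $x\in X_i$ is a fiber of $g$, i.e. the proper transform in $\Bl_{q_i,\{p_{ijk}\}}\PP^2$ of a general line through $q_i$ — which is a $\PP^1$. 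Intersecting with the general hyperplane $H$ cuts this fiber transversally in the ambient $\PP^{2N}$; since the fiber is already one-dimensional, $E_x$ is (set-theoretically) this same $\PP^1$, and one checks it meets the rest of $\widetilde S$ transversally. So $E_x$ is a smooth rational curve, and it remains to compute $E_x \cdot E_x$ inside $\widetilde S$.

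**Computing the self-intersection.**

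For the self-intersection I would use the enumerative description of the Chow ring of $\tilde\PP^3$ from section \ref{sec:chow-ring-blowup}. The proper transform in $\tilde\PP^3$ of a general line $\ell$ through $q_i$ lying in the camera plane $H_i$ is a curve $C$ whose class can be written in terms of $h$, $Q_i$, and the $P_{ijk}$ (for $j,k\neq i$, since such a line meets the $N-2$ blown-up points $p_{ijk}$ on $H_i$). The surface $\widetilde S$ is the intersection of $\tilde\PP^3$ with the pullback of a general hyperplane section via $\tilde\phi$, i.e. a divisor in the class $\tilde\phi^*c_1(\OO(1))$ computed in lemma \ref{lem:pullback_c1}, namely $D = N h + 2\sum P_{ijk} + \sum Q_i + \sum T_{ij}$. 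By adjunction / the excess-intersection bookkeeping on $\tilde\PP^3$, $E_x \cdot_{\widetilde S} E_x$ equals the self-intersection of $C$ computed against $D$; concretely this reduces to evaluating a degree-$3$ monomial expression in $h, Q_i, P_{ijk}$ using the Poincaré polynomial relations $Q_i^3 = -h^3$, $P_{ijk}^3 = -h^3$, and the fact that classes attached to disjoint centers multiply to zero. Carrying out this computation should yield $E_x \cdot E_x = -(N-1)$: morally, the proper transform of the line through $q_i$ acquires $-1$ from blowing up $q_i$ plus $-1$ from each of the $N-2$ points $p_{ijk}$ it passes through, matching $-(N-1)$. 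The main obstacle is getting this self-intersection bookkeeping exactly right — tracking how the blowups of $q_i$ and of the $N-2$ points $p_{ijk}$ on the line, together with the hyperplane section, contribute to the normal bundle of $E_x$ in $\widetilde S$ — rather than any conceptual difficulty, since the pieces (smoothness, rationality, the Chow ring relations) are all already in place.
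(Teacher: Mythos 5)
Your overall strategy is the same as the paper's: identify the fiber over a general $x\in X_i$ as the proper transform of a line $\ell\subset H_i$ through $q_i$, and compute its self-intersection in $\tilde{S}=\tilde{\phi}^{-1}(S)$ by normal-bundle bookkeeping in $\A^\bullet(\tilde{\PP}^3)$ using lemma \ref{lem:pullback_c1} and the relations of section \ref{sec:chow-ring-blowup}. The first two steps (Bertini/birationality, identification of $E$ via the model $g\circ h$ of proposition \ref{prop:hd-camera}) are fine. But the concrete self-intersection computation you describe rests on a wrong geometric claim: a general line through $q_i$ in $H_i$ does \emph{not} pass through any of the points $p_{ijk}$ (there are $\binom{N-1}{2}$ of them on $H_i$, not $N-2$, and a general line through $q_i$ misses them all); what it does meet is each of the $N-1$ lines $L_{ij}=H_i\cap H_j$, $j\neq i$, in a general point. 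Consequently the class of $E$ is $[E]=h^2+Q_i^2+h\sum_{j\neq i}T_{ij}$, involving the $T_{ij}$'s and no $P_{ijk}$'s, so your proposed evaluation of ``a degree-$3$ monomial expression in $h,Q_i,P_{ijk}$'' uses the wrong curve class. The heuristic is off for the same reason: inside the blown-up camera plane $\Bl_{q_i,\{p_{ijk}\}}\PP^2$ the curve $E$ is a fiber of the map to $X_i$ and has self-intersection $0$, so $-(N-1)$ cannot be explained as ``$-1$ from $q_i$ plus $-1$ from each of $N-2$ points $p_{ijk}$''; the negativity only appears when one computes in $\tilde{S}$, and it comes from the $T_{ij}$ intersections.

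The other missing piece is that ``$E_x\cdot_{\tilde{S}}E_x$ equals the self-intersection of $C$ computed against $D$'' is not yet a formula. What is needed (and what the paper uses) is the exact-sequence/Whitney relation $c(N_{E/\tilde{S}})\cdot c\bigl(\tilde{\phi}^*\OO_{\PP^{2N}}(1)|_E\bigr)=c(N_{E/\tilde{\PP}^3})$, coming from $N_{\tilde{S}/\tilde{\PP}^3}=\OO_{\tilde{\PP}^3}(\tilde{S})|_{\tilde{S}}=\tilde{\phi}^*\OO(1)|_{\tilde{S}}$, together with the key observation that $\tilde{\phi}^*\OO(1)$ restricts trivially to $E$ (equivalently $[E]\cdot\tilde{\phi}^*c_1(\OO(1))=0$) because $E$ is contracted to the point $x$ --- a point your sketch never uses. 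With the correct class one gets $\deg c_1(N_{E/\tilde{\PP}^3})=[E]\cdot c_1(T\tilde{\PP}^3)-2=(4-2-(N-1))-2=-(N-1)$, where the $-(N-1)$ comes from the terms $hT_{ij}^2=-h^3$, and hence $E\cdot_{\tilde{S}}E=-(N-1)$. So the ingredients you name (smoothness, the $\PP^1$ fiber, lemma \ref{lem:pullback_c1}, the Poincar\'e-polynomial relations) are the right ones, but the identification of which blown-up centers $E$ meets, and hence its class, must be corrected before the bookkeeping actually yields $-(N-1)$.
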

\begin{proof}
Let $E$ be the preimage of $x$. Note that $E$ is the proper transform of a line 
in the camera plane of the $i$-th camera. 
To compute the self intersection of $E$ in $\tilde{S} = \tilde{\phi}^{-1}(S)$ consider the following embeddings:
\[
E \xhookrightarrow{i} \tilde{S} \xhookrightarrow{j} \tilde{\PP}^3.
\]
By the Whitney sum formula, we have
\[
(ji)_*(c(N_{E/\tilde{S}})) = (ji)_*c(N_{E/\tilde{P}^3}) \cap \tilde{\phi}^*(\OO_{\PP^{2N}}(-1)).
\]

As we have already computed $\tilde{\phi}^*(\OO_{\PP^{2N}}(1)) \in \A^{\bullet}(\tilde{\PP}^3)$, we just have to calculate
$(ji)_*c(N_{E/\tilde{P}^3})$.
By intersecting $E$ with the generators of $\A^{2}(\tilde{\PP}^3)$ we find
\[
[E] = h^2 + Q_i^2 + h\sum_{j\neq i}T_{ij}.
\]
Using this identity together with our presentation of $\A^{\bullet}(\tilde{\PP}^3)$ gives 
\[
(ji)_*c(N_{E/\tilde{P}^3}) = [E] - (N-1)h^3.
\]
Plugging everything into the Whitney sum formula shows that the degree of $c(N_{E/S})$ is $-(N-1)$, which completes the proof.
\end{proof}

We now show that this self intersection number determines the Euler obstruction $\Eu_S(x)$.
\begin{lem}\label{lem:eu-s}
With $x \in S$ the isolated singularity as above, $\Eu_S(x) = 3 - N$.
\end{lem}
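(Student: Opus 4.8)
The plan is to determine the analytic type of the germ $(S,x)$ from the resolution data supplied by Lemma~\ref{lem:ratl-norm-curve}, and then to compute the Euler obstruction of the resulting model singularity from its Nash blowup.

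First I would record that, by Lemma~\ref{lem:ratl-norm-curve}, $\tilde\phi|_S\colon\tilde S\to S$ is a resolution whose fibre $E$ over $x$ is a smooth rational curve with $E\cdot E=-(N-1)$. For $N\ge 3$ the weight $-(N-1)$ is at most $-2$, so $E$ is not a $(-1)$-curve; hence this is the minimal good resolution and its dual graph is a single genus-$0$ vertex of weight $-(N-1)$. (For $N\le 2$ one can contract $E$, so $x$ is a smooth point of $S$ and $\Eu_S(x)=1=3-N$, and we may assume $N\ge 3$.) Because $(S,x)$ is \emph{taut}, its analytic isomorphism class is determined by this graph, and a concrete representative is the affine cone $C$ over the rational normal curve $Y\cong\PP^1$ of degree $N-1$ in $\PP^{N-1}$ — equivalently the cyclic quotient singularity $\tfrac1{N-1}(1,1)$. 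Since the local Euler obstruction is an analytic invariant of the germ, it suffices to show $\Eu_C(0)=3-N$.

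For $\Eu_C(0)$ I would use the Nash blowup. For a cone over a smooth curve $Y\subset\PP^m$, blowing up the vertex already produces the Nash blowup $\widehat C$: it is the total space of $\OO_Y(-1)$, the exceptional divisor is the zero section $Y$ with $N_{Y/\widehat C}\cong\OO_Y(-1)$, and the Nash bundle restricts on $Y$ to the cone tangent bundle $\widehat{T}Y$, which sits in the twisted Euler sequence $0\to\OO_Y(-1)\to\widehat{T}Y\to TY\otimes\OO_Y(-1)\to 0$. The Gonz\'alez--Sprinberg formula then gives $\Eu_C(0)=\int_Y c(\widehat{T}Y)\cap s(Y,\widehat C)$. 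Taking $Y$ to be the degree-$(N-1)$ rational normal curve, so $\OO_Y(-1)\cong\OO_{\PP^1}(-(N-1))$, one computes $s(Y,\widehat C)=[Y]+(N-1)[\mathrm{pt}]$ and $c_1(\widehat{T}Y)=(4-2N)[\mathrm{pt}]$, whence $\Eu_C(0)=(N-1)+(4-2N)=3-N$. Equivalently, the computation records the identity $\Eu_S(x)=\chi(E)+E\cdot E=2-(N-1)$, so that the single enumerative input needed is the self-intersection number from Lemma~\ref{lem:ratl-norm-curve}.

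The main obstacle is the first step: concluding that the resolution graph alone pins down the germ. This is exactly what the tautness established earlier in Section~\ref{sec:euler_obs_of_x} buys us; without it one would instead have to verify directly that $\tilde\phi|_S$ is the Nash blowup of $S$ and identify its Nash bundle, in which case the input $E\cdot E=-(N-1)$ re-enters through the same Chern-class bookkeeping. Everything after the identification is a short Segre-class computation, the only delicate point being the low values $N\le 2$, where $S$ is smooth at $x$ and the formula $3-N$ still holds.
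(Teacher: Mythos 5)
Your argument follows the same route as the paper: the heart of both proofs is the tautness argument (Laufer), which upgrades the resolution data of Lemma~\ref{lem:ratl-norm-curve} --- a single smooth rational exceptional curve of self-intersection $-(N-1)$ --- to an analytic identification of $(S,x)$ with the vertex of the cone over the rational normal curve of degree $N-1$. The only divergence is the final evaluation: the paper simply quotes \cite[3.17]{Alu} for the Euler obstruction of that cone, whereas you recompute it via the Nash blowup and the Gonz\'alez--Sprinberg--Verdier formula; your bookkeeping is correct ($s(Y,\widehat{C})=[Y]+(N-1)[\mathrm{pt}]$, $c_1(\widehat{T}Y)=(4-2N)[\mathrm{pt}]$, total $2-(N-1)=3-N$), and the extension $0\to\OO_Y(-1)\to\widehat{T}Y\to T_Y\otimes\OO_Y(-1)\to 0$ is the right description of the Nash bundle on the exceptional curve. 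This buys a self-contained derivation at the modest cost of justifying that the vertex blowup computes the Nash data for these cones (true for degree $\geq 2$, or one can push the formula to the vertex blowup by birational invariance of Segre classes); your added care about minimality of the resolution and the degenerate range is harmless, though note that for $N=1$ the value $3-N$ is not $1$ --- that case simply falls outside the setup, since there is no singular locus $X_i$ to speak of.
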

\begin{proof}
Recall (\cite{L}) that a singularity germ (X,x) is \emph{taut} if the analytic type of $(X,x)$ is determined by the resolution graph of some resolution of singularities.
By \cite[2.2]{L} the vertex of the cone over the rational normal curve with degree $n$ is taut.
Let us denote this singularity by $(X_n,0)$.
Since this singularity has a resolution in which the exceptional divisor is a $\PP^1$ with self intersection $-n$, the resolution graph is a single vertex with weight $(0,-n)$. It follows that \emph{any} singularity with this resolution graph is analytically equivalent to $(X_n,0)$.

In particular, by lemma \ref{lem:ratl-norm-curve}, $(S,x)$ is analytically equivalent to $(X_{N-1},0)$
so the Euler obstruction $\Eu_S(x)$ is equal to the Euler obstruction $\Eu_{X_{N-1}}(0)$.
By \cite[3.17]{Alu}, the latter is equal to $3-N$.
\end{proof}

In conclusion, $\Eu_{MV_N}(x) = 3-N$, so equation \ref{eqn:csm-mather} becomes
\[
\tilde{\phi}_*(c(\tilde{\PP}^3)) = c^M(MV_N) + (N-1)c^M(\PP^1)
\]
By plugging in our calculation of $\tilde{\phi}_*(c(\tilde{\PP}^3))$ we 
obtain $c^M(MV_N)$.
\begin{thm}\label{thm:mather-class}
The Chern-Mather class of the multiview variety of $N$ cameras in general position is \linebreak $\sum_{i=0}^3 c_i^M(MV_N)$ where 
\begin{itemize}
\item $c^M_0(MV_N) = 4 + 4N - 2N^2 + 2\binom{N}{3} + 2\binom{N}{2}$
\item $c^M_1(MV_N) = 7N - N^2 + (N-4)\binom{N}{2}$
\item $c^M_2(MV_N) = 4N^2 - 2\binom{N}{3} - 6\binom{N}{2} - 2N$
\item $c^M_3(MV_N) = N^3 - (4+N)\binom{N}{2} - N -2\binom{N}{3}$
\end{itemize}
and $c_i^M(MV_N) = \int c^M(MV_N) \cap [\PP^{2N-i}]$.
\end{thm}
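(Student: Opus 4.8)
Theorem \ref{thm:mather-class} is the bookkeeping consequence of the two substantial computations already in hand: the pushforward formula of Proposition \ref{prop:chern-pushforward}, and the value $\Eu_{MV_N}(x) = 3-N$ obtained from Lemmas \ref{lem:ratl-norm-curve} and \ref{lem:eu-s} together with the generic hyperplane-section identity $\Eu_{MV_N}(x)=\Eu_S(x)$ of \cite[Sec.~3]{BTS}. The plan is therefore simply to assemble these pieces.

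First I would recall equation \ref{eq:constructible_functions}: since $\tilde\phi$ is birational onto $MV_N$ and, by Propositions \ref{prop:pushforward-one} and \ref{prop:hd-camera}, the only proper lower stratum of the target is the disjoint union $\coprod_{i=1}^N X_i$ with each $X_i\cong\PP^1$, one has $\tilde\phi_*(\mathbb{1}_{\tilde\PP^3}) = \Eu_{MV_N} + \alpha\sum_{i=1}^N \Eu_{X_i}$, where $\alpha = 2 - \Eu_{MV_N}(x)$ is pinned down by evaluating at a general point $x$ of one of the $X_i$, over which the fibre of $\tilde\phi$ is a $\PP^1$. Next I would apply the Chern-Schwartz-MacPherson natural transformation $c_*$ to this identity. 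On the left, $c_*(\mathbb{1}_{\tilde\PP^3}) = c(\tilde\PP^3)$ because $\tilde\PP^3$ is smooth, and covariance of $c_*$ for the proper map $\tilde\phi$ turns the left-hand side into $\tilde\phi_* c(\tilde\PP^3)$. On the right, $c_*(\Eu_{MV_N}) = c^M(MV_N)$, while each $X_i$ is smooth, so $c_*(\Eu_{X_i}) = c^M(X_i) = [\PP^1] + 2[\PP^0]$ in $\A_\bullet(\PP^{2N})$. This is equation \ref{eqn:csm-mather}, with the contributions of all $N$ curves $X_i$ made explicit, so the correction term carries a factor $N$.

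Then I would substitute $\Eu_{MV_N}(x) = 3-N$, so that $\alpha = N-1$, and solve for the Chern-Mather class:
\[
c^M(MV_N) = \tilde\phi_* c(\tilde\PP^3) - N(N-1)\bigl([\PP^1] + 2[\PP^0]\bigr).
\]
Since the subtracted term lives in dimensions $\le 1$, the components $c^M_3$ and $c^M_2$ are read off unchanged from Proposition \ref{prop:chern-pushforward}, while $c^M_1$ and $c^M_0$ come from the $[\PP^1]$ and $[\PP^0]$ coefficients there by subtracting $N(N-1)$ and $2N(N-1)$ respectively; a one-line simplification (for instance $6N + (N-4)\binom{N}{2} - N(N-1) = 7N - N^2 + (N-4)\binom{N}{2}$) yields the four displayed formulas. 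Finally I would note that $\int c^M(MV_N)\cap[\PP^{2N-i}]$ is exactly the coefficient of $[\PP^i]$ just extracted, which matches the stated normalization.

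I do not expect a genuine obstacle at this stage, since every input has already been proven. The two points requiring care are invoking covariance of $c_*$ correctly, so that the transformed left-hand side is $\tilde\phi_* c(\tilde\PP^3)$ and not a class on $\tilde\PP^3$, and keeping the factor $N$ from the $N$ disjoint copies of $\PP^1$ when passing from constructible functions to homology; a slip in either would corrupt precisely $c^M_1$ and $c^M_0$.
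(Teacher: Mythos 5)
Your proposal is correct and follows the paper's own route exactly: the constructible-function identity (equation \ref{eq:constructible_functions}), determination of $\alpha = 2-\Eu_{MV_N}(x) = N-1$ from the $\PP^1$ fibre and $\Eu_{MV_N}(x)=3-N$, application of the CSM transformation, and subtraction of the correction from Proposition \ref{prop:chern-pushforward}. You are in fact slightly more careful than the displayed equation \ref{eqn:csm-mather}, which drops the factor $N$ coming from the sum over the $N$ curves $X_i$; your correction term $N(N-1)\left([\PP^1]+2[\PP^0]\right)$ is exactly what the theorem's stated coefficients require.
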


\subsection{The ED degree of the multiview variety}
As a corollary of theorem \ref{thm:mather-class}, we can compute the Euclidean distance degree of $MV_N$.
\begin{thm}\label{thm:ed-of_mv}
  The ED degree of the multiview variety of $N$ cameras in general position is equal to
  \[
  ED(MV_N) = 6N^3 - 15N^2 + 11N - 4.
  \]
\end{thm}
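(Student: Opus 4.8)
The plan is to combine the general theory relating ED degree to the Chern--Mather class (from \cite{ED}) with the explicit formula for $c^M(MV_N)$ obtained in Theorem \ref{thm:mather-class}. Recall from \cite[Theorem 5.8]{ED} that for a projective variety $X \subset \PP^n$ which is not contained in the isotropic quadric, the ED degree satisfies
\[
ED(X) \leq \sum_{i=0}^{\dim X} \left( \text{coefficients built from the } \delta_i(X) \right),
\]
where the $\delta_i(X)$ are the ranks of the polar classes, and these are in turn expressible as integer combinations of the Chern--Mather classes $c_i^M(X)$. First I would write out this linear-algebra dictionary explicitly for a threefold in $\PP^{2N}$: plug the four values $c_0^M(MV_N),\dots,c_3^M(MV_N)$ from Theorem \ref{thm:mather-class} into the polar-degree formulas, then into the alternating sum that bounds $ED$. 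Carrying out this substitution and simplifying the binomial expressions $\binom{N}{2}$ and $\binom{N}{3}$ should collapse everything to the cubic $6N^3 - 15N^2 + 11N - 4$. This gives the inequality $ED(MV_N) \leq 6N^3 - 15N^2 + 11N - 4$.

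The remaining, and genuinely harder, task is to upgrade this inequality to an equality. The bound from \cite{ED} is sharp precisely when $MV_N$ meets the isotropic quadric $Q = \{z_0^2 + \dots + z_{2N}^2 = 0\}$ transversally (or, more precisely, when the projective duality/polar-class count is not spoiled by $X \cap Q$ being more singular or more tangent than expected). So the key step is to verify that for a \emph{general} configuration of $N$ cameras, the intersection $MV_N \cap Q$ behaves generically — that $Q$ is transverse to the smooth locus of $MV_N$ and does not pass through the singular locus $\amalg_i X_i$ in a special way. Here I would use the fact that the embedding $MV_N \hookrightarrow \PP^{2N}$ depends on the camera parameters, so varying the cameras varies $MV_N$ inside a family, while $Q$ is fixed; a dimension count plus a Bertini-type argument should show that the bad locus of camera configurations is a proper closed subset, and hence a general configuration is good. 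The phrase in the introduction, ``we show that this inequality can be promoted to an equality for reasons specific to the multiview variety,'' signals that this transversality is where the work is.

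Concretely, the steps in order are: (1) record the formula of \cite{ED} expressing the generic-ED-degree bound for a threefold as an explicit $\ZZ$-linear combination of $c_0^M,\dots,c_3^M$; (2) substitute the values from Theorem \ref{thm:mather-class} and simplify to get $6N^3 - 15N^2 + 11N - 4$ as an upper bound; (3) prove that a general camera configuration places $MV_N$ in sufficiently general position relative to the isotropic quadric $Q$, so that the hypotheses for equality in \cite{ED} hold — this is the main obstacle — and conclude $ED(MV_N) = p(N)$. I expect step (3) to require the most care, since it is the one place where we cannot simply quote a formula: it needs an argument that the multiview variety, despite its special structure, is not pathologically positioned with respect to $Q$, and this likely uses the explicit description of the singular locus $\amalg_i X_i$ together with the freedom to move the cameras.
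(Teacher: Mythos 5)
Your steps (1)--(2) coincide with the first half of the paper's proof: the sum of the polar degrees $\sum\delta_i(MV_N)$ is obtained from the Chern--Mather class of Theorem \ref{thm:mather-class} (the paper does this via the formula of \cite{Alu}) and simplifies to $6N^3-15N^2+11N-4$. The gap is in your step (3), which is precisely the step the introduction flags as requiring an argument ``specific to the multiview variety.'' You propose to get equality by showing that a general camera configuration places $MV_N$ in general position with respect to the isotropic quadric $Q$, via ``a dimension count plus a Bertini-type argument'' over the space of configurations. No such soft argument is available: the multiview varieties are not an orbit of a group acting on $\PP^{2N}$, so Kleiman--Bertini transversality does not apply, and varying the cameras deforms $MV_N$ inside a rather constrained family with no a priori reason that a general member is transverse to the fixed quadric $Q$. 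That this is genuinely delicate is illustrated by the fact, recalled in the introduction, that for the original affine triangulation problem the count for general cameras is expected to be $\frac{9}{2}N^3-\frac{21}{2}N^2+8N-4$, strictly smaller than $p(N)$; genericity of the cameras alone does not wash out special position relative to the Euclidean/isotropic structure, so the assertion ``general cameras $\Rightarrow$ generic position with respect to $Q$'' needs a concrete mechanism, not a parameter count.

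The paper's mechanism is different: translations of the affine chart $\Cplx^{2N}$ preserve the class of multiview varieties. Writing $MV_N=\overline{X}$ with $X\subset\Cplx^{2N}$, the closure $\overline{X}_v$ of a translate is again a multiview variety, namely the one attached to the modified cameras $M_iP_i$, where $M_i$ is the $3\times 3$ matrix encoding the translation by $(v_{2i-1},v_{2i})$ in the $i$-th image plane. Combining this with the result quoted from the proof of \cite[6.11]{ED} --- that for a general translate the ED degree equals the sum of the polar classes of the closure --- yields $ED(MV_N)=\sum\delta_i(MV_N)=p(N)$ for a general configuration, with no direct transversality verification against $Q$ at all. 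In effect, the translation subgroup is the concrete source of ``motion relative to $Q$'' that your Bertini sketch is missing. To complete your outline you would either have to rediscover this translation trick or actually prove the transversality statement you assert; as written, step (3) is a plan rather than a proof, and it is the essential point of the theorem.
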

\begin{proof}
We can use the formula in \cite{Alu} to express the sum of the polar degrees of $MV_N$ in terms of the Chern-Mather classes.
Using this formula gives:
\[
\sum \delta_i(MV_N) = 6N^3 - 15N^2 + 11N - 4.
\]

Now, by the proof of \cite[6.11]{ED}, if $X$ is an affine cone, then the ED degree of $\overline{X}_v$
is equal to the sum of the polar classes of $\overline{X}_v$ for a general translate $X_v$ of $X$.

Suppose $MV_N$ is the multiview variety associated to the camera matrices $P_1,\dots,P_N$.
Recall that $MV_N \subset \PP^{2N}$ is the projective closure of a subvariety of $\Cplx^{2N}$  which we will call $X$.
Let $(v_1,v_2,\dots,v_{2N-1},v_{2N}) \in \Cplx^{2N}$ be a vector.
We will now show that $\overline{X}_v$ is multiview variety associated to a different collection of cameras.
Indeed, let $M_i$ be the matrix
\[
M_i =
\begin{pmatrix}
  1 & 0 & v_{2i-1} \\
  0 & 1 & v_{2i}  \\
  0 & 0 & 1
\end{pmatrix}
\]
for $1 \leq i \leq N$.
Then, the variety $\overline{X}_v$ is the multiview variety associated to the cameras
$M_i\cdot P_i$ for $1 \leq i \leq N$.

In conclusion, there exists a general configuration of cameras such that the ED degree of the
associated multiview variety $MV_N$ is equal to the sum of the polar classes of $MV_N$.
\end{proof}

\bibliographystyle{plain}
\bibliography{refs}

\end{document}